\documentclass[11pt,a4paper]{article}
\usepackage{amsmath,amssymb,amsthm,amsfonts,latexsym,graphicx,subfigure}
\usepackage[all,import]{xy}
\usepackage{indentfirst,cite}
\usepackage{tabularx,booktabs}
\usepackage{caption,amscd}
\usepackage{hyperref}

\usepackage{fancyhdr,enumerate}
\usepackage{bbm,color}
\usepackage{tikz}
\usetikzlibrary{shapes.geometric, arrows}
\usepackage{accents,cases}
\usepackage{multirow}
\usepackage{euscript}\usepackage{wasysym}\usepackage{pdfsync}\usepackage{comment}
\newtheorem{introthm}{Theorem}

\usepackage{array,float}
\newcommand{\PreserveBackslash}[1]{\let\temp=\\#1\let\\=\temp}
\newcolumntype{C}[1]{>{\PreserveBackslash\centering}p{#1}}
\newcolumntype{R}[1]{>{\PreserveBackslash\raggedleft}p{#1}}
\newcolumntype{L}[1]{>{\PreserveBackslash\raggedright}p{#1}}

\setlength{\textheight}{22cm} \setlength{\textwidth}{15cm}
\setlength{\oddsidemargin}{8mm} \setlength{\evensidemargin}{0mm}
\setlength{\topmargin}{-5mm}

\DeclareMathOperator*{\cone}{\ensuremath{cone}}

\DeclareMathOperator*{\conv}{\ensuremath{conv}}

\newcommand{\h}{\mathcal{H}}
\newcommand{\dist}{\mathrm{dist}}

\newcommand{\K}{\ensuremath{\mathcal{K}}}

\newcommand{\E}{\ensuremath{\mathcal{E}}}
\newcommand{\C}{\ensuremath{\mathcal{C}}}
\newcommand{\B}{\ensuremath{\mathcal{B}}}
\newcommand{\T}{\ensuremath{\mathcal{T}}}
\makeatletter
\def\wbar{\accentset{{\cc@style\underline{\mskip8mu}}}}

\makeatother

\renewcommand{\vec}[1]{\mbox{\boldmath \small $#1$}}

\newcommand{\power}{\ensuremath{\mathcal{P}}}

\newcommand{\R}{\ensuremath{\mathbb{R}}}
\newcommand{\A}{\ensuremath{\mathcal{A}}}

\theoremstyle{plain}
\newtheorem{theorem}{Theorem}[section]

\newtheorem{defn}{Definition}[section]
\newtheorem{notification}{Convention}
\newtheorem{lemma}{Lemma}[section]
\newtheorem{remark}{Remark}

\newtheorem{pro}{Proposition}[section]
\newtheorem{example}{Example}[section]
\newtheorem{Question}{Question}[section]

\def\D{{\mathcal D}}
\def\Lov{Lov\'asz extension }

\allowdisplaybreaks[4]
\begin{document}
\bibliographystyle{unsrt}
\title{Discrete-to-Continuous Extensions:  Lov\'asz extension and Morse theory
}
\author{J\"urgen Jost\footnotemark[1], \and Dong Zhang\footnotemark[2]}
\footnotetext[1]{Max Planck Institute for Mathematics in the Sciences, Inselstrasse 22, 04103 Leipzig,
Germany. \\Email address:
{\tt  jost@mis.mpg.de}  (J\"urgen Jost).
}
\footnotetext[2]{Max Planck Institute for Mathematics in the Sciences, Inselstrasse 22, 04103 Leipzig, Germany. \\
Email addresses:
{\tt dzhang@mis.mpg.de} \; and \; {\tt 13699289001@163.com} (Dong Zhang).
}
\date{}
\maketitle

\begin{abstract}

This is the first of a series of papers that  develop a systematic
  bridge between constructions in  discrete mathematics and the corresponding   continuous analogs. 
 In this paper, we establish an equivalence between Forman's discrete Morse theory on a simplicial complex and the continuous Morse theory (in the sense of any known non-smooth Morse theory) on the associated  order complex via the    Lov\'asz extension. Furthermore, we propose a new version of the  Lusternik-Schnirelman category on abstract simplicial complexes  to bridge the classical Lusternik-Schnirelman theorem and its discrete analog on finite complexes. More generally,
we can suggest a discrete Morse theory on hypergraphs by employing piecewise-linear (PL) Morse theory and Lov\'asz extension, hoping to provide new tools for exploring  the structure of hypergraphs.

\vspace{0.2cm}

\noindent\textbf{Keywords:}
Lov\'asz extension; 
discrete Morse theory; 
 Lusternik-Schnirelman theory;  hypergraph; simplicial complex
\end{abstract}
\tableofcontents

\tikzstyle{startstop} = [rectangle, rounded corners, minimum width=1cm, minimum height=1cm,text centered, draw=black, fill=red!0]
\tikzstyle{io1} = [rectangle, trapezium left angle=80, trapezium right angle=100, minimum width=1cm, minimum height=1cm, text centered, draw=black, fill=blue!0]
\tikzstyle{io2} = [trapezium,  rounded corners, trapezium left angle=100, trapezium right angle=100, minimum width=1cm, minimum height=1cm, text centered, draw=black, fill=yellow!0]
\tikzstyle{process} = [rectangle, minimum width=1cm, minimum height=1cm, text centered, draw=black, fill=orange!0]
\tikzstyle{decision} = [circle, minimum width=1cm, minimum height=1cm, text centered, draw=black, fill=green!0]
\tikzstyle{decision2} = [ellipse, rounded corners=10mm, minimum width=2cm, minimum height=2cm, text centered, draw=black, fill=green!0]
\tikzstyle{arrow} = [thick,->,>=stealth]

\section{Introduction and Background}\label{sec:introduction}

The Lov\'asz extension is a basic tool in discrete mathematics, especially for some combinatorial optimization problems and submodular analysis \cite{Lovasz}. It was introduced in the study of submodular functions which appear frequently in many areas like game theory, matroid theory, stochastic processes, electrical networks, computer vision and machine learning \cite{F05-book}.

\vspace{0.16cm}

In fact, a special form of the Lov\'asz extension appeared already in the context of the
 Choquet integral \cite{Choquet54} which has fruitful applications in statistical mechanics, potential theory and  decision theory.
Since the Lov\'asz extension does not require
the monotonicity of the set function in finite cases of the Choquet integral, it has a wider range of applications, for instance in  
 combinatorics, for algorithms  in computer science.  

\vspace{0.16cm}

We shall start by looking  at the original Lov\'asz extension. For simplicity, we shall work throughout this paper with a finite and nonempty set $V=\{1,\cdots,n\}$ and its power set $\mathcal{P}(V)$. 
We denote the cardinality of a set $A$ by $\#A$.
Given a function $f:\mathcal{P}(V)\to \R$, one identifies every $A\in \mathcal{P}(V)$ with its indicator vector $\vec1_A\in \{0,1\}^n\subset\R^n $. The Lov\'asz extension  extends the domain of $f$ to the whole Euclidean space\footnote{Some other versions in the literatures only extend the domain to the cube $[0,1]^V$ or the nonnegative orthant  $\R_{\ge0}^V$. In fact, many works on Boolean lattices  identify  $\power(V)$ with the discrete cube $\{0,1\}^n$.} $\R^V$ in the following way: 

For $\vec x =(x_1,\dots ,x_n)\in \mathbb{R}^n$, let $\sigma:V\cup\{0\}\to V\cup\{0\}$ be a bijection such that $ x_{\sigma(1)}\le x_{\sigma(2)} \le \cdots\le x_{\sigma(n)}$ and $\sigma(0)=0$, where $x_0:=0$. The Lov\'asz extension of $f$ is defined by
\begin{equation}\label{eq:Lovasum}
f^{L}(\vec x)=\sum_{i=0}^{n-1}(x_{\sigma(i+1)}-x_{\sigma(i)})f(V^{\sigma(i)}(\vec x)),
\end{equation}
where   $V^0(\vec x)=V$ and $V^{\sigma(i)}(\vec x):=\{j\in V: x_{j}> x_{\sigma(i)}\},\;\;\;\; i=1,\cdots,n-1$.

It is known that $f^L$ is positively one-homogeneous, PL (piecewise linear) and Lipschitzian continuous \cite{Lovasz,Bach13}. Also, $f^L(\vec x+t\vec 1_V)=f^L(\vec x)+tf(V)$, $\forall t\in\R$, $\forall \vec x\in\R^V$. Moreover, a continuous function $F:\R^V\to \R$ is the Lov\'asz extension of some $f:\power(V)\to\R$ if and only if $F(\vec x+\vec y)=F(\vec x)+F(\vec y)$ whenever $(x_i-x_j)(y_i-y_j)\ge0$, $\forall i,j\in V$.


\vspace{0.13cm}


It is an important feature of the \Lov that submodular functions are
  extended to convex ones and therefore, discrete optimization problems
  involving submodular functions become amenable to the powerful tools of
  convex optimization. It is our systematic purpose to explore the power of
  the \Lov in
the context of functions that are not submodular or convex and explore its
potential there. In this paper, which is a part of our general program, we want to use the \Lov to study the interplay between  discrete and continuous aspects in  Morse theory.
 To this end, we shall work on a restricted version of the  Lov\'asz extension. 
For $\A\subset\power(V)$ and $f:\A\to\R$, we take  $\D_\A=\{\vec
x\in\R^{n}_{\ge0}:V^t(\vec x)\in\A,\forall t\in\R\}$ as a feasible domain of the  Lov\'asz extension $f^L$,  where $V^t(\vec x)=\{i\in V:  x_i>t\}$ is the upper level set of $\vec x$ at level $t$ (using this notation, we indeed write $V^{x_{\sigma(i)}}(\vec x)$  as $V^{\sigma(i)}(\vec x)$ in  \eqref{eq:Lovasum} for simplicity). It is clear that for $\vec x\in \D_\A$, the  Lov\'asz extension $f^L$ of $\vec x$  is well-defined by \eqref{eq:Lovasum}.




The discrete Morse theory on simplicial complexes was introduced by Forman
\cite{Forman98,Forman02}. This theory has some deep  connections with smooth
Morse theory \cite{Benedetti12,Gallais10,Benedetti16}, as well as practical
applications \cite{RWS11}, and also admits several slight generalizations. It
also possesses surprising applications in algebraic combinatorics and  derived
algebraic geometry  \cite{AroneBrantner}. Both this  discrete Morse theory
and the classical smooth one make assumptions that exclude some
complicated cases such as  monkey-saddle points, which makes them  simpler.

We will construct the relationship between the Morse theory of a discrete
Morse function, that is, a function defined on the face set  $\K$ of an abstract 
  simplicial complex, and its Lov\'asz extension in Section
\ref{sec:discrete-Morse}. We define the \Lov on the order complex  $S_\K$, the simplicial
  complex whose vertex set is  
  $\K$ and whose faces are the
 inclusion  chains in $\K$. In this way, we can use the procedure of \Lov to extend a
  function on a set of discrete points, the vertices of $S_\K$.

We restrict the Lov\'asz extension $f^L$ to a geometric realization of the order complex $|S_\K|$ which is a subset of the feasible domain of $f^L$. It is surprising that the Lov\'asz  extension on restricted domains leads to  a fascinating connection between discrete and continuous Morse theory and Lusternik-Schnirelman theory:  

\begin{introthm}[Theorems \ref{thm:critical-discrete-Morse}, \ref{thm:Morse-vector-f} and \ref{thm:LS-category-discrete-Morse-Lovasz}]
\label{Mainthm:discrete-Morse}
For a simplicial complex with vertex set $V$ and face set $\K$, let $f:\K\to\R$ be an injective discrete Morse function. Then the following conditions are equivalent:
\begin{enumerate}[(1)]
\item $\sigma$ is a critical point of $f$   with $\dim \sigma=i$;
\item $\vec1_\sigma$ is a critical point of
  $f^L|_{|S_\K|}$ with index $ i$ in the sense of weak slope (metric Morse theory);
\item  $\vec1_\sigma$ is a critical point of $f^L|_{|S_\K|}$ with index $ i$ in the sense of K\"uhnel (PL Morse theory);
\item  $\vec1_\sigma$ is a Morse critical point of $f^L|_{|S_\K|}$ with index $ i$ in the sense of topological Morse theory.
\end{enumerate}
Here the notation $|S_\K|$ indicates a suitable restriction, more
  precisely the  geometric realization of the order complex of $\K$ whose
  vertices are the simplices\footnote{Here and after, for convenience, we do not differentiate  
 between  a  simplicial complex and its face set (both are denoted by $\K$).} of $\K$ and whose simplices correspond to the
  chains in $\K$,  (see Section \ref{sec:discrete-Morse}) for $f^L$ being well-defined.

Moreover, the discrete Morse vector $(n_0,n_1,\cdots,n_d)$,  representing the number $n_i$ of critical points with index $i$, of $f$ coincides with the  continuous Morse vector of $f^L|_{|S_\K|}$.

Moreover, the Lusternik-Schnirelmann category\footnote{The definition of category classes for $\K$ is introduced in \eqref{eq:LSC-K} in  Section \ref{sec:discrete-Morse}. } theorem is preserved  under Lov\'asz extension:
$$
\min\limits_{L\in\mathsf{LSC}_m(\K)}\max\limits_{\sigma\in L} f(\sigma)=\inf\limits_{S\in \mathsf{LSC}_m(|S_\K|)}\sup\limits_{\vec x\in S} f^L(\vec x),
$$
where we refer to \eqref{eq:LSC-K} and  \eqref{eq:LSC-S_K} for the definition of the  Lusternik-Schnirelmann classes $\mathsf{LSC}_m(\K)$ and $\mathsf{LSC}_m(|S_\K|)$, 
respectively. 
\end{introthm}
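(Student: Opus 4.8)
The plan is to translate every assertion into the PL geometry of the Lov\'asz extension and then to read Forman's combinatorics off the local structure at the distinguished vertices $\vec1_\sigma$. First I would record that, by the order-cone description underlying \eqref{eq:Lovasum} (the chamber $x_{\sigma(1)}\le\cdots\le x_{\sigma(n)}$ on which $f^L$ is linear), the restriction $f^L|_{|S_\K|}$ is affine on every simplex of the realized order complex $|S_\K|$, with value $f(\sigma)$ at the vertex $\vec1_\sigma$; concretely $f^L(\sum_j t_j\vec1_{\sigma_j})=\sum_j t_j f(\sigma_j)$ on the simplex spanned by a chain $\sigma_0\subsetneq\cdots\subsetneq\sigma_m$. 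Since $f$ is injective, adjacent vertices carry distinct values, so $f^L|_{|S_\K|}$ is a simplexwise-linear function in general position along edges; such a function has a strict descent direction at every non-vertex point, so in each of the three frameworks a critical point can only occur at some $\vec1_\sigma$. This isolates the problem to a purely local question at the $\vec1_\sigma$.

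The next step is to identify, uniformly for the weak-slope (metric), K\"uhnel (PL) and topological notions, the single local invariant that governs criticality: the lower link $\mathrm{lk}^-(\sigma)$, i.e.\ the full subcomplex of the link of $\vec1_\sigma$ in $|S_\K|$ spanned by the neighbouring vertices $\vec1_\rho$ with $f(\rho)<f(\sigma)$. The standard local models give a clean dichotomy: $\vec1_\sigma$ is regular exactly when $\mathrm{lk}^-(\sigma)$ is contractible (in fact collapsible), and is critical of index $i$ exactly when $\mathrm{lk}^-(\sigma)$ is a PL $(i-1)$-sphere. For the weak slope this is the assertion that a contractible lower link supplies a uniform descent deformation while its absence forces weak slope $0$; for the K\"uhnel and topological versions it is the ball/sphere classification of lower links. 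The exclusion of monkey-saddle behaviour noted in the introduction is precisely what guarantees that the lower links occurring here are \emph{honest} spheres, so that the three notions coincide and share the same index.

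The heart of the argument is then the combinatorial identification of $\mathrm{lk}^-(\sigma)$ with Forman's critical/regular dichotomy. Because the link of $\vec1_\sigma$ in an order complex splits as the join of the order complex of proper faces $\{\rho\subsetneq\sigma\}$ with that of proper cofaces $\{\tau\supsetneq\sigma\}$, the lower link decomposes as $\mathrm{lk}^-(\sigma)=\Delta(L_D)*\Delta(L_U)$ with $L_D=\{\rho\subsetneq\sigma:f(\rho)<f(\sigma)\}$ and $L_U=\{\tau\supsetneq\sigma:f(\tau)<f(\sigma)\}$. I would promote Forman's codimension-one conditions to the whole poset using the defining inequalities of a discrete Morse function: a counting argument on the (at least two) cofacets of $\sigma$ lying inside any higher coface shows that $L_U\ne\varnothing$ already forces a codimension-one coface of smaller value, and dually for $L_D$; moreover in the up-paired case the matched cofacet $\tau_0$ turns out to be the minimum of $L_U$, so $\Delta(L_U)$ is a cone. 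Assembling the cases: if $\sigma$ is Forman-critical then $L_U=\varnothing$ and $L_D$ consists of all proper faces, whence $\mathrm{lk}^-(\sigma)\cong\mathrm{sd}\,\partial\sigma\cong S^{i-1}$ and the index is $i$; if $\sigma$ is regular through an up-pairing then $\Delta(L_U)$ is a contractible cone and the join is contractible; if it is regular through a down-pairing then $L_U=\varnothing$ while $\Delta(L_D)$ is $\mathrm{sd}\,\partial\sigma$ with the open star of the exceptional facet removed, hence a ball. This establishes the equivalence of (1)--(4) with matching indices, and summing over all $\sigma$ gives the identity of the discrete and continuous Morse vectors \cite{Forman98,Gallais10,Benedetti16}.

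For the Lusternik--Schnirelmann equality I would exploit that the sublevel sets $\{f^L\le c\}$ deformation retract onto the realizations of the discrete sublevel subcomplexes $\{\sigma:f(\sigma)\le c\}$ --- which again follows from the simplexwise-linear model together with the regular-point deformations above --- and that the two category notions correspond under $\K\leftrightarrow|S_\K|$ on subcomplexes. Using $f^L(\vec1_\sigma)=f(\sigma)$, a discrete categorical cover attaining the left-hand minimax realizes to a continuous one with the same maximal value, giving ``$\ge$'', while any continuous cover can be pushed down across the regular intervals onto realizations of discrete categorical sets without raising $\sup f^L$, giving ``$\le$''. The step I expect to be most delicate is the uniform treatment of the three continuous Morse theories --- in particular checking that the lower links arising here are genuine PL spheres, so that the \emph{topological} index is even defined and agrees with the weak-slope and K\"uhnel indices --- together with the careful collapsibility argument in the regular down-pairing case, where one must control \emph{all} faces of $\sigma$ of smaller value and not merely the codimension-one ones.
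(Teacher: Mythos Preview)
Your approach is essentially the paper's: identify $f^L|_{|S_\K|}$ with the simplexwise-linear function on the barycentric subdivision, reduce everything to the homotopy type of the lower link $\mathrm{lk}^-(\sigma)$ via the join decomposition $\Delta(L_D)*\Delta(L_U)$, and use K\"uhnel's sublevel lemma (Lemma~\ref{lemma:Kuhnel}) for the Lusternik--Schnirelmann part. Your treatment of the critical case and of the up-paired regular case is correct and coincides with Lemmas~\ref{lemma:Morse-sigma} and~\ref{lemma:link-homotopy}: in particular, for an up-paired $\sigma$ the matched cofacet $\tau$ really is the minimum of $L_U$ by Lemma~\ref{lemma:Morse-sigma}(2.1), so $\Delta(L_U)$ is a cone.

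There is, however, a concrete error in the down-pairing case. You assert that if $\alpha$ is down-paired with exceptional facet $\gamma$ then $L_D$ equals \emph{all} proper faces of $\alpha$ except $\gamma$, so that $\Delta(L_D)$ is ``$\mathrm{sd}\,\partial\alpha$ with the open star of $\gamma$ removed''. This is false: proper faces of $\gamma$ may well have $f$-value larger than $f(\alpha)$ (they are only guaranteed to be smaller than $f(\gamma)$). On the full $2$-simplex $\{1,2,3\}$ take
\[
f(\{1,2,3\})=3,\; f(\{1,2\})=7,\; f(\{1,3\})=2,\; f(\{2,3\})=1,\; f(\{1\})=5,\; f(\{2\})=4,\; f(\{3\})=0.
\]
This is an injective discrete Morse function, $\alpha=\{1,2,3\}$ is down-paired with $\gamma=\{1,2\}$, yet $\{1\}$ and $\{2\}$ both lie \emph{outside} $L_D(\alpha)$.

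The conclusion you need --- contractibility of $\Delta(L_D)$ --- is nonetheless true, but via a different mechanism. Let $v=\alpha\setminus\gamma$. By Lemma~\ref{lemma:Morse-sigma}(2.2) every proper face of $\alpha$ containing $v$ belongs to $L_D$; hence $\rho\mapsto\rho\cup\{v\}$ is a well-defined monotone, increasing, idempotent self-map of $L_D$ (note $\rho\cup\{v\}\ne\alpha$ since $\rho\ne\gamma$). Its image is the poset $\{\rho:v\in\rho\subsetneq\alpha\}$, which has minimum $\{v\}$ and therefore contractible order complex; the closure-operator lemma then gives $\Delta(L_D)\simeq\mathrm{pt}$. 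The paper itself waves at this case (``Similarly, one can check that $\mathrm{link}_-(\tau)\cong\mathbb{B}^{\dim\tau-1}$'' in Lemma~\ref{lemma:link-homotopy}), so your instinct that this was the delicate step was right --- you just misdiagnosed what makes it delicate.
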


   \begin{center}
\begin{tikzpicture}[node distance=6cm]

\node (disMorse) [startstop] { discrete Morse theory };

\node (Morse) [startstop, right of=disMorse, xshift=1.6cm, yshift=1.5cm]  { continuous Morse theory }; 

\node (smoothMorse) [startstop, right of=disMorse, xshift=1.6cm, yshift=-1.5cm] { smooth Morse theory };

\draw [arrow](disMorse) --node[anchor=south] { \small Lov\'asz } (Morse);
\draw [arrow](Morse) --node[anchor=north] {\small extension  } (disMorse);
\draw [arrow](smoothMorse) --node[anchor=north] {\small Benedetti  } (disMorse);
\draw [arrow](disMorse) --node[anchor=south] { \small Gallais } (smoothMorse);
\draw [arrow](Morse) --node[anchor=north] {\small  Italian School \& Berlin School \;\;\;} (smoothMorse);
\draw [arrow](smoothMorse) --node[anchor=north] {\small } (Morse);
\end{tikzpicture}
\end{center}

In summary, Theorem \ref{Mainthm:discrete-Morse} says that the {\sl Morse structures} of $\K$ and $|S_\K|$ are coarsely equivalent, and one can translate all the results about  `Morse data' of a discrete Morse function $f$ on $\K$ to its Lov\'asz extension $f^L$ restricted on $|S_\K|$. This also reflects the deep result from \cite{Gallais10,Benedetti16} that smooth Morse theory on a manifold is almost equivalent to the discrete Morse theory on its triangulation. The difference is that  we don't assume the complex $|\K|$ to be a topological manifold, so that   topological results on manifolds cannot be applied directly. Fortunately, our feasible domain $|S_\K|$ is a piecewise flat geometric complex. Our proofs don't draw heavily on the standard tools in discrete Morse theory. 


The idea above allows us to establish a discrete Morse theory on hypergraphs, which helps us to understand the structure of a hypergraph from a Morse theoretical perspective.  We  borrow the ideas on  associated
simplicial complexes of hypergraphs  \cite{PL91} and order complexes  induced by   hypergraphs \cite{Wachs04,Stanley79}, as well as  finite topologies on hypergraphs. With the help of associated
simplicial complexes, we introduce the geometric realizations of  hypergraphs, and we prove that the geometric realization of a hypergraph  collapses onto the geometric realization of  its order complex.  As a preliminary exploration along this direction, we provide some evidence to show that we should focus on the order complex of a hypergraph which looks like a simplicial complex.  Some results similar to Theorem \ref{Mainthm:discrete-Morse} on such a complex-like hypergraph are presented  in Section  \ref{sec:Morse-hypergraph}. 

\vspace{0.19cm}

\textbf{Related works}: 
 The recent paper \cite{RWWW18}
on  discrete Morse theory for hypergraphs   uses the so-called embedded homology on hypergraphs  \cite{BLRW19}, that is,  they embed the hypergraph into a simplicial complex obtained by adding some missing simplices, and define  a Morse function on a hypergraph as the restriction of a Morse function on that simplicial complex.  The underlying homology theory \cite{GWXW21} takes the difference between the original hypergraph and the embedding complex into account. 
The theory of \cite{RWWW18}  is different from our approach. We consider both  associated
simplicial complexes and order complexes from the perspective of homotopy, and our definition of Morse functions on  hypergraphs uses order complexes. Such an order complex is naturally defined from the hypergraph, without the need to add any further simplices. In particular, the order complex of a hypergraph in general is different from that of an embedding simplicial complex.

We  provide a dictionary between different analogs of Morse theory.  The relations between different versions of  Morse theory  have great potential to translate a
problem in one context to another, thereby giving new tools for attacking problems and drawing  connections. The key tool 
is the Lov\'asz extension, with which we succeeded  in
constructing  fruitful  relations between certain  discrete objects 
and their continuous analogs  \cite{JostZhang2,JostZhang3}. 

\section{Preliminaries on Morse theory}
\label{sec:critical}

Morse theory \cite{M36,M37} enables us to analyze the topology of an object $M$ by studying  functions $f:M\to\R$. 
In the classical case, $M$ is a manifold and $f$ is generic and differentiable. 
There are, however, many extensions of Morse theory  in modern mathematics that do not   require a smooth structure,  such as
the metric and topological Morse theory by the Italian school \cite{DM94,IS96,K94,D10}, the PS (piecewise smooth) or stratified Morse theory by Thom, Goresky and MacPherson \cite{GM88}, the PL  Morse theory by Banchoff \cite{Banchoff67}, K\"uhnel \cite{Brehm-Kuhnel87,Edelsbrunner-Harer10} and the Berlin school, as well as the discrete Morse theory by Forman \cite{Forman98,Forman02}. 

In all such cases, a typical function $f$ on $M$ will reflect the topology quite directly,  allowing one to find CW structures 
on $M$ and to obtain  information about their homology.  The following results embody the abstract content of Morse theory, and they  hold in continuous as well as in discrete cases.

\vspace{0.19cm}

\textbf{Morse fundamental theorem}. If $f$ has $n_i$ critical points of index $i$, $i=0,1,\cdots,d$, then $M$ is
	homotopy equivalent to a cell complex (called Morse complex) with $n_i$ cells of dimension $i$. One can write it as
$$M\simeq \text{cell complex with }n_i\text{ cells of }\dim i$$

\textbf{Morse relation}. Denote by $P(X,A)(\cdot)$ the Poincare polynomial\footnote{Formally, $P(X,A)(t):=\sum_{n\ge0}\mathrm{rank}\, H^n(X,A)t^n$, where $H^n(X,A)$ is the relative cohomology of the pair $(X,A)$.} of the pair of topological spaces $(X,A)$ over a given field $\mathbb{F}$, where $X\supset A$. Then
$$
\sum\limits_{a<f(x)<b}P(\{f\le f(x)\},\{f\le f(x)\}\setminus\{x\})(t)=P(\{f<b\},\{f\le a\})(t)+(1+t)Q(t)
$$
where $a<b$, $Q(\cdot)$ is a polynomial with nonnegative coefficients.

The main aim of this section is to study the Lov\'asz extension of a discrete Morse function on a simplicial complex, and to provide equivalences between discrete Morse theory and its Lov\'asz extension.

For this purpose, we first clarify the notions and  concepts and summarize the various Morse theories mentioned above.

\begin{enumerate}
\item[-] \textbf{Metric Morse theory}:
Let $M$ be a metric space and  $F$  a continuous function on $M$. For a point $\vec a\in M$,
there exists $\epsilon\ge0$ such that there exist $\delta>0$ and a continuous map
$$\h:B_\delta(\vec a)\times[0,\delta]\to M$$
satisfying $$F(\h(\vec x,t))\le F(\vec x)-\epsilon t,\;\; \dist(\h(\vec x,t),\vec x)\le t$$
for any $\vec x\in B_\delta(\vec a)$ and $t\in[0,\delta]$,  where $ B_\delta(\vec a)$ is an open ball in $M$.  The {\sl weak slope} \cite{DM94,IS96,K94} denoted by $|dF|(\vec a)$ is defined as the supremum of such $\epsilon$ above. A point $\vec a$ is called a critical point of $F$ on $M$, if it has vanishing weak slope, i.e., $|dF|(\vec a)=0$.

The local behaviour of $F$ near $\vec a$ is described by the so-called {\sl
  critical group} $C_q(F,\vec a):=H_q(\{F\le c\}\cap U_{\vec a},\{F\le c\}\cap
U_{\vec a}\setminus \{\vec a\})$, $q\in\mathbb{Z}$, where $H_*(\cdot,\cdot)$
is the singular relative homology  with real field coefficients, and $U_{\vec a}$ is an open neighborhood of $\vec a$.  So the Morse polynomial $p(F,\vec
a)(t):=\sum_{q=0}^d \mathrm{rank}\, C_q(F,\vec a) t^q$ can be defined. If
$C_q(F,\vec a)$ is non-vanishing, then we say that $q$ is an
index of a metric critical point $\vec a$, and the number
$p(F,\vec a)(1)$ is called the {\sl total multiplicity} of $\vec a$. (Note that, in general, a critical point may have more
    than one index. The standard assumptions of Morse theory, however, exclude
    that possibility.)

\item[-] \textbf{Topological Morse theory}:
Let $M$ be a topological space and  $F$  a continuous function on $M$. A point $\vec a\in M$ is a {\sl Morse regular point} of $F$ if there exist a neighborhood $U$  of $\vec a$ in $M$  and a continuous map $$\h:U\times[0,1]\to M,\;\; \h(\vec x,0)=\vec x$$
satisfying $$F(\h(\vec x,t))< F(\vec x),$$
for any $\vec x\in U$ and $t>0$. We say that $\vec a$ is a {\sl Morse critical point} of  $F$ on $M$ if it is not Morse regular. The index with multiplicity of a critical point is same as in the metric setting above \cite{D10}.

A {\sl symmetric homological critical value} \cite{C-SEH07} of $F$ is a real number $c$ for which there exists an integer such that for all sufficiently small $\epsilon> 0$, the map $H_k(\{F\le c-\epsilon\})\hookrightarrow H_k(\{F\le c+\epsilon\})$ induced by inclusion is not an isomorphism \cite{B-Scott14}. Here $H_k$ denotes the $k$-th singular homology (possibly with coefficients in a field).

A  real number $c$ is a {\sl homological regular value} of the function $F$ if there exists $\epsilon> 0$ such that for each pair of real numbers $t_1<t_2$ on the interval $(c-\epsilon,c+\epsilon)$, the inclusion $\{F\le t_1\}\hookrightarrow \{F\le t_2\}$ induces isomorphisms on all homology groups \cite{B-Scott14}. A
real number that is not a homological regular value of $F$ is called a {\sl homological
critical value} of $F$.

\item[-] \textbf{Piecewise-Linear Morse theory}: Similar to the smooth setting, the PL (piecewise linear) Morse theory introduced by Banchoff  requires working with a {\sl combinatorial manifold} which is both a PL manifold and a simplicial complex. Here we will use the notions developed by K\"uhnel \cite{Brehm-Kuhnel87} and later by Edelsbrunner  and Harer  \cite{Edelsbrunner-Harer10}. 

    Denote by $\mathrm{star}_-(v)$ the subset of the star of $v$ on which the PL function $F$ takes values not greater than $F(v)$. Similarly, one can define $\mathrm{link}_-(v)$. 

Let $M$ be a combinatorial manifold, and let $F$ be a PL (piecewise linear) function on $M$.
\begin{defn}[K\"uhnel \cite{Brehm-Kuhnel87}]
A vertex $v$ of $M$ is said to be a PL critical point of $F$ with index $i$ and multiplicity $k_i$ if $\beta_i(\overline{\mathrm{star}_-(v)},\mathrm{link}_-(v))=k_i$, where $\beta_i$ is 
the $i$-th Betti number of the relative homology group.
\end{defn}
Equivalently, let $\beta'_j$ be the rank of the reduced $j$-th homology group of $\mathrm{link}_-(v)$. Using this notation, we have
\begin{defn}[Edelsbrunner \& Harer  \cite{Edelsbrunner-Harer10}]
A vertex $v$ is a PL critical point of $F$ with index $i$ and multiplicity $k_i$ if $\beta'_{i-1}=k_i$.
\end{defn}
Clearly, a PL critical point may have many indices and multiplicities. A vertex $v$ is called non-degenerate critical if its total multiplicity $\sum_{i=0}^d k_i$ is equal to $1$. The PL  function $F$ is called a PL Morse function if all critical vertices are non-degenerate.

\item[-] \textbf{Discrete Morse theory}: A discrete Morse function on an abstract simplicial complex $(V,\K)$ is a function $f:\K\to\R$ satisfying for any $p$-dimensional simplex $\sigma\in \K_p$, $\#U(\sigma)\le1$ and $\#L(\sigma)\le1$, where
$$U(\sigma):=\{\tau^{p+1}\supset\sigma:f(\tau)\le f(\sigma)\}\;\text{ and }\;L(\sigma):=\{\nu^{p-1}\subset\sigma:f(\nu)\ge f(\sigma)\}.$$
\begin{defn}[Forman \cite{Forman98,Forman02}]
We say that $\sigma\in \K_p$ is a critical point of $f$ on $\K$ if
$\#U(\sigma)=0$ and $\#L(\sigma)=0$. The index of a critical point $\sigma$ is defined to be $\dim\sigma$.
\end{defn}
\end{enumerate}

The main results in this paper  can be summarized by:
\begin{center}
\begin{tikzpicture}[node distance=6.6cm]

\node (convex) [startstop] { $\begin{array}{l}\text{Discrete Morse data of a} \\ \text{typical discrete Morse function }f \\\text{on a finite simplicial complex }\K  \end{array}$};

\node (submodular) [startstop, right of=convex, xshift=1.6cm]  { $\begin{array}{l}\text{Continuous Morse data of the} \\ \text{Lov\'asz extension }f^L \\\text{restricted on a suitable domain}  \end{array}$};

\draw [arrow](convex) --node[anchor=south] { \small equivalent } (submodular);
\draw [arrow](submodular) --node[anchor=south] {   } (convex);
\end{tikzpicture}
\end{center}

 While the discrete Morse data are taken here in the sense of Forman,  the continuous Morse data can be in the  metric, topological or PL category as described above.


Precise statements are presented in the following section.

\label{sec:weak-slope-Morse}

\section{Relations between discrete Morse theory and its Lov\'asz  extension}
\label{sec:discrete-Morse}


We let $\K$ be an abstract  simplicial complex with vertex set $V$, and we do not distinguish between $\K$ and its face set.   Let $\K_p$ be the collection of $p$-simplices (or $p$-dimensional faces)  in $\K$.
\begin{defn}\label{defn:ordercomplex}
The {\sl order complex} of $\K$ is defined by
$$S_\K:=\{\C\subset\K: \C\text{ is a chain}\},$$
where $\C$ is a {\sl chain} if for any $\sigma_1,\sigma_2\in\C$, either $\sigma_1\subset\sigma_2$ or $\sigma_2\subset\sigma_1$. 
 It is clear that the order complex  $S_\K$  is itself a simplicial complex with the vertex set
$\K$. Define the  geometric realization of $S_\K$ by
$$|S_\K|= \bigcup_{\C\in S_\K}\conv(\vec 1_{\sigma}:\sigma\in \C),$$
where $\conv$  denotes the convex hull.
\end{defn}

\noindent \textbf{Fact}: For any function $f:\K\to\R$, the feasible domain $\D_\K$ of its Lov\'asz extension $f^L$ is $\bigcup\limits_{t\ge0}t|S_\K|$. This means that the Lov\'asz extension $f^L$ is well-defined on $|S_\K|$.

We refer to Proposition \ref{pro:setfamily-order-complex} for the proof of the  fact above  and the observation below. 

\vspace{0.13cm}

\noindent\textbf{Observation}:
\begin{equation*}
|S_\K|=\D_\K\cap S_\infty
= \bigcup_{\text{maximal chain }\C\subset\K}\conv(\vec 1_{\sigma}:\sigma\in \C),
\end{equation*}
where $S_\infty=\{\vec x\in\R^n:\|\vec x\|_\infty=1\}$ is the unit $l^\infty$-sphere, and $\conv$ denotes convex hull. 
Maximal chains from $\K$  correspond to 
facets (that is, simplices not contained in the others) 
of $|S_\K|$.

\begin{lemma}\label{lemma:Morse-sigma}
Given a discrete Morse function $f$ on a finite simplicial complex $\K$, we have:
\begin{enumerate}[(1)]
\item If $\sigma$ is critical, then  $f(\tau)> f(\sigma)> f(\nu)$, whenever $\tau\supsetneqq\sigma\supsetneqq \nu$.

\item If $f$ is an injective Morse function and $(\sigma,\tau)$ is a regular pair (i.e., $\sigma\subset\tau$ with $\dim \tau=\dim \sigma+1$ and $f(\sigma)> f(\tau)$), then
\begin{enumerate}[({2}.1)]
\item for all $\tau'\supsetneqq \sigma$ with $\tau'\not\supset \tau\setminus\sigma$, $f(\tau')>f(\sigma)$;

\item for any $\sigma'\subsetneqq\tau$ with $\sigma'\supset \tau\setminus\sigma$, $f(\sigma')<f(\tau)$;

\item  for each $\sigma''\subsetneqq\sigma$, $f(\sigma'')<f(\sigma)$.
\end{enumerate}
\end{enumerate}
\end{lemma}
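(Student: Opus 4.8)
The plan is to reduce everything to the two defining inequalities $\#U(\sigma)\le 1$ and $\#L(\sigma)\le 1$ and to propagate them along flags of faces by induction on codimension. Before starting I would record the elementary consequence of Forman's definition that for every simplex $\rho$ at most one of $U(\rho)$, $L(\rho)$ is nonempty: if $\nu^{(p-1)}\in L(\rho)$ and $\theta^{(p+1)}\in U(\rho)$ with $\dim\rho=p$, then $f(\nu)\ge f(\rho)\ge f(\theta)$, so $\rho\in L(\theta)$ and $\rho\in U(\nu)$; letting $\rho'\ne\rho$ be the unique other $p$-simplex with $\nu\subset\rho'\subset\theta$, the bounds $\#L(\theta)\le 1$ and $\#U(\nu)\le 1$ force $f(\rho')<f(\theta)$ and $f(\rho')>f(\nu)$, hence $f(\nu)<f(\theta)$, contradicting $f(\nu)\ge f(\theta)$. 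Note this needs no injectivity.

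For part (1), assume $\sigma$ is critical, so $\#U(\sigma)=\#L(\sigma)=0$. I would prove $f(\tau)>f(\sigma)$ for every $\tau\supsetneq\sigma$ by induction on $k=\dim\tau-\dim\sigma$. The base case $k=1$ is exactly $\#U(\sigma)=0$. For $k\ge 2$, suppose $f(\tau)\le f(\sigma)$; then for each $v\in\tau\setminus\sigma$ the immediate facet $\rho=\tau\setminus\{v\}$ still contains $\sigma$, and the induction hypothesis gives $f(\rho)>f(\sigma)\ge f(\tau)$, so $\rho\in L(\tau)$. Since $\#(\tau\setminus\sigma)=k\ge 2$ there are at least two such $\rho$, whence $\#L(\tau)\ge 2$, a contradiction. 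The dual statement $f(\nu)<f(\sigma)$ for every $\nu\subsetneq\sigma$ is proved by the mirror-image induction on $\dim\sigma-\dim\nu$: the base case is $\#L(\sigma)=0$, and in the inductive step a supposed $f(\nu)\ge f(\sigma)$ produces at least two immediate cofaces $\rho=\nu\cup\{w\}\subset\sigma$ with $f(\rho)<f(\sigma)\le f(\nu)$, i.e.\ $\#U(\nu)\ge 2$.

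For part (2) I would first extract the local picture of a regular pair $(\sigma,\tau)$, writing $\tau=\sigma\cup\{v\}$. From $f(\tau)<f(\sigma)$ we get $\tau\in U(\sigma)$ and $\sigma\in L(\tau)$, so $\#U(\sigma)\ge 1$ and $\#L(\tau)\ge 1$; combining the ``at most one'' fact above with $\#U,\#L\le 1$ pins down $\#U(\sigma)=1$ with unique element $\tau$, $\#L(\sigma)=0$, $\#L(\tau)=1$ with unique element $\sigma$, and $\#U(\tau)=0$. Then (2.3) is literally the down-direction induction of part (1), which used only $\#L(\sigma)=0$. Statement (2.1) is the up-direction induction restricted to cofaces avoiding $v$: its base case uses that $\tau$ is the \emph{only} member of $U(\sigma)$, so any immediate coface $\sigma\cup\{w\}$ with $w\ne v$ exceeds $f(\sigma)$; in the inductive step, for $\tau'\supsetneq\sigma$ with $v\notin\tau'$ one removes vertices of $\tau'\setminus\sigma$ to obtain immediate facets that still contain $\sigma$ and still avoid $v$, so the hypothesis applies and a supposed $f(\tau')<f(\sigma)$ again yields $\#L(\tau')\ge 2$. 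Statement (2.2) is the exact dual, run on faces $\sigma'\subsetneq\tau$ containing $v$ using $\#L(\tau)=1$, a supposed $f(\sigma')>f(\tau)$ producing $\#U(\sigma')\ge 2$.

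The only genuinely delicate point is the bookkeeping in part (2): one must verify that the face/coface operations used in each inductive step preserve the side condition ($v\notin\tau'$ for (2.1), $v\in\sigma'$ for (2.2)) and keep the current simplex strictly between $\sigma$ (resp.\ $\tau$) and its neighbour, so that the induction hypothesis genuinely applies and the counted facets/cofaces are all distinct. Granting this, every case closes by the same two-line contradiction with $\#U\le 1$ or $\#L\le 1$, and injectivity enters only to convert the non-strict comparisons coming from $U$ and $L$ into the strict inequalities asserted in the lemma.
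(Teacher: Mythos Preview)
Your proof is correct and follows essentially the same inductive scheme as the paper: propagate the strict inequalities along flags, and derive a contradiction with $\#U\le 1$ or $\#L\le 1$ by exhibiting at least two violating facets or cofaces. The paper's proof only spells out the downward direction of (1) and leaves the rest to ``the other proofs are similar,'' whereas you treat both directions and all of (2.1)--(2.3) explicitly, including the preliminary fact that $U(\rho)$ and $L(\rho)$ cannot both be nonempty; one minor remark is that your closing comment overstates the role of injectivity, since the strict inequalities already follow from the definitions of $U$ and $L$ without it.
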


\begin{proof}[Proof of Lemma \ref{lemma:Morse-sigma}] Let $c=f(\sigma)$.
Note that $f(\nu^{p-1})<c$ for all $\nu^{p-1}\subset \sigma$. If there exists $\nu^{p-2}\subset \sigma$ such that $f(\nu^{p-2})\ge c$, then $f(\nu^{p-2})\ge f(\sigma)>f(\nu^{p-1})$ for all $\nu^{p-1}\supset\nu^{p-2}$ with $\nu^{p-1}\subset \sigma$. Since there are two $\nu^{p-1}$ in $\sigma$ containing $\nu^{p-2}$, this is not compatible with  the definition of a discrete Morse function. In this way, we can prove  by  induction on the dimension of faces of $\sigma$  that every face $\nu\subset \sigma$ satisfies $f(\nu)<c$.

The  proofs of the other statements  are similar.
\end{proof}

\begin{notification}
We use $\cong$ and $\simeq $ to express  homeomorphism equivalence and homotopy equivalence, respectively. The link and star of some $\sigma\in\K$ will be taken on $S_\K$. The operation $*$  denotes the geometric join operator \cite{Brown}.
\end{notification}

\begin{lemma}\label{lemma:link-homotopy}
Given an injective  discrete Morse function, we have:
$$
\mathrm{link}_-(\sigma)\simeq\begin{cases}\mathbb{S}^{\dim \sigma-1} ,& \text{ if } \sigma\text{ is critical},\\
\mathrm{pt},& \text{ if }  \sigma\text{ is regular}.
\end{cases}
$$
\end{lemma}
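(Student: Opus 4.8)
The plan is to read off $\mathrm{link}_-(\sigma)$ directly from the combinatorial structure of the order complex $S_\K$, feeding it the inequalities recorded in Lemma \ref{lemma:Morse-sigma}. The starting point is the standard join description of links in an order complex: for a vertex $\sigma$ of $S_\K$,
$$\mathrm{link}(\sigma)=S_{\K_{<\sigma}}*S_{\K_{>\sigma}},$$
where $S_{\K_{<\sigma}}$ and $S_{\K_{>\sigma}}$ are the order complexes of the subposets $\K_{<\sigma}=\{\nu\in\K:\varnothing\ne\nu\subsetneq\sigma\}$ and $\K_{>\sigma}=\{\tau\in\K:\tau\supsetneq\sigma\}$, since a chain through $\sigma$ splits uniquely into a chain of proper faces and a chain of proper cofaces. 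Because $f^L$ is affine on each simplex of $|S_\K|$ and satisfies $f^L(\vec 1_\rho)=f(\rho)$, a simplex lies in the lower link exactly when all its vertices $\rho$ satisfy $f(\rho)<f(\sigma)$ (injectivity turning $\le$ into $<$). As passing to full subcomplexes is compatible with joins,
$$\mathrm{link}_-(\sigma)=(S_{\K_{<\sigma}})_-*(S_{\K_{>\sigma}})_-,$$
the subscript $-$ denoting restriction to vertices of value below $f(\sigma)$. The whole argument then reduces to identifying these two factors in each case.

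For critical $\sigma$, Lemma \ref{lemma:Morse-sigma}(1) gives $f(\nu)<f(\sigma)$ for every proper face and $f(\tau)>f(\sigma)$ for every proper coface. Hence $(S_{\K_{<\sigma}})_-=S_{\K_{<\sigma}}$ is the full order complex of the proper nonempty faces of $\sigma$, which is the barycentric subdivision of $\partial\sigma$ and so homeomorphic to $\mathbb{S}^{\dim\sigma-1}$, while $(S_{\K_{>\sigma}})_-$ is empty. Since $A*\varnothing=A$, this yields $\mathrm{link}_-(\sigma)\cong\mathbb{S}^{\dim\sigma-1}$.

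For regular $\sigma$, since $\sigma$ is not critical it is a member of at least one regular pair, and I will show that one of the two join factors is contractible; a join with a contractible factor is itself contractible, because $A*B$ is a cone whenever $A$ is. If $\sigma$ is the lower member of a pair $(\sigma,\tau)$, set $\{v\}=\tau\setminus\sigma$: by the contrapositive of Lemma \ref{lemma:Morse-sigma}(2.1), every proper coface $\tau'$ with $f(\tau')<f(\sigma)$ must contain $v$, hence $\tau'\supseteq\tau$, so $\tau$ is the minimum of the poset underlying $(S_{\K_{>\sigma}})_-$, whose order complex is therefore a cone with apex $\tau$ and is contractible. If instead $\sigma$ is the upper member of a pair $(\nu,\sigma)$, set $\{w\}=\sigma\setminus\nu$: Lemma \ref{lemma:Morse-sigma}(2.2) shows that $\rho'\mapsto\rho'\cup\{w\}$ carries the poset underlying $(S_{\K_{<\sigma}})_-$ into itself, and this monotone self-map satisfies $\rho'\subseteq\rho'\cup\{w\}$ and $\{w\}\subseteq\rho'\cup\{w\}$, i.e. it dominates both the identity and the constant map at $\{w\}$; the standard poset-homotopy lemma (comparable poset maps induce homotopic maps on order complexes) then forces this order complex to be contractible. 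Either way $\mathrm{link}_-(\sigma)\simeq\mathrm{pt}$.

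The main obstacle is the regular case, precisely the verification that the relevant partial factor (lower cofaces in the first subcase, lower faces in the second) is contractible; this is exactly where parts (2.1)–(2.3) of Lemma \ref{lemma:Morse-sigma} are indispensable, as they are what force the poset to acquire a minimum element or to admit the conical contraction $\rho'\mapsto\rho'\cup\{w\}$. I would additionally check the low-dimensional instances (for example $\dim\sigma=1$, where $\mathbb{S}^{0}$ is a two-point space and the relevant subcomplexes can degenerate to single vertices) to confirm that the homeomorphism and contractibility claims persist there.
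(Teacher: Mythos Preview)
Your proof is correct and takes essentially the same approach as the paper: decompose $\mathrm{link}(\sigma)$ in $S_\K$ as the join of the face- and coface-order-complexes, then use Lemma~\ref{lemma:Morse-sigma} to determine which vertices lie below $f(\sigma)$. Your treatment of the case where $\sigma$ is the upper member of a regular pair (via the closure map $\rho'\mapsto\rho'\cup\{w\}$ and the comparable-maps homotopy lemma) is in fact more explicit than the paper's, which disposes of that case in a single line.
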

\begin{proof}
The link of $\sigma$ in the order complex $|S_\K|$ is the geometric join\footnote{The (geometric) join of  subsets $A$ and $B$ in $\R^n$ is defined as  $A*B:=\{t\vec a+(1-t)\vec b:\vec a\in A,\vec b\in B,0\le t\le1\}$. We refer to \cite{Brown} for details.} of
$$\mathbb{S}_-(\vec1_\sigma):=\bigcup_{\text{ chain }\C\subset\mathcal{P}(\sigma)\setminus\{\sigma\}}\conv(\vec 1_{\nu}:\nu\in \C)\cong \mathbb{S}^{\dim\sigma-1}$$
and
$$\bigcup_{\text{ chain }\C\subset\left\{\tau\in\K\left|\tau\supsetneqq\sigma\right.\right\}}\conv(\vec 1_{\tau}:\tau\in \C).$$
According to Lemma \ref{lemma:Morse-sigma} and the definition of $\mathrm{link}_-(\sigma)$, we obtain that if $\sigma$ is critical, then $$\mathrm{link}_-(\sigma):=\mathrm{link}_-(\vec1_\sigma)=\mathbb{S}_-(\vec1_\sigma) \cong \mathbb{S}^{\dim\sigma-1}.$$

If $(\sigma,\tau)$ is a regular pair, we note that $\mathrm{link}_-(\sigma)$ is the join of
$\mathbb{S}_-(\vec1_\sigma)$ and
$$
\bigcup_{\text{ chain }\C\in [\sigma,\tau]_f}\conv(\vec 1_{\tau'}:\tau'\in \C)\simeq \vec 1_{\tau},
$$
where $[\sigma,\tau]_f:=\{\tau'\supset\tau: f(\tau')<f(\sigma)\}$. That means, $\mathrm{link}_-(\sigma)\simeq \mathbb{S}^{\dim\sigma-1}*\vec 1_{\tau}\cong\mathbb{B}^{\dim\sigma}\simeq \mathrm{pt}$.

Similarly, one can check that
$\mathrm{link}_-(\tau)\cong \mathbb{B}^{\dim\tau-1}\simeq \mathrm{pt}$. 
 The proof is completed.
\end{proof}

\begin{lemma}[K\"uhnel \cite{Kuhnel90}] \label{lemma:Kuhnel}
Given a real-valued PL function $f^{PL}$ on a simplicial complex $|\K|$, then the induced subcomplex of $\K$ on $\{v\in \K_0: f^{PL}(v)\le t\}$ is homotopy-equivalent to the sublevel set $\{f^{PL}\le t\}$.
\end{lemma}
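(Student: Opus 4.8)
The plan is to prove the stronger statement that the sublevel set $\{f^{PL}\le t\}$ \emph{strong deformation retracts} onto the geometric realization of the full subcomplex
$$\K_{\le t}:=\{\sigma\in\K:f^{PL}(v)\le t\text{ for every vertex }v\text{ of }\sigma\};$$
homotopy equivalence is then immediate. This is the standard PL-topology fact that sublevel sets of PL functions collapse onto full subcomplexes, and I would make it explicit by writing down the retraction in intrinsic barycentric coordinates. For $\vec x\in|\K|$ let $\lambda_v(\vec x)$, $v\in\K_0$, be its barycentric coordinates; these are continuous on $|\K|$ and depend only on the carrier of $\vec x$, so any construction expressed through them glues automatically across simplices. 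Note also that $|\K_{\le t}|\subseteq\{f^{PL}\le t\}$ holds for free, since on a simplex all of whose vertices lie in $L:=\{v:f^{PL}(v)\le t\}$ the linear function $f^{PL}$ attains its maximum at a vertex.

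Split the vertices into the low set $L$ and the high set $H:=\{v:f^{PL}(v)>t\}$, and set $s(\vec x)=\sum_{v\in H}\lambda_v(\vec x)$, the total weight carried by high vertices. Whenever $s<1$ the low part $\vec a(\vec x):=\frac{1}{1-s}\sum_{v\in L}\lambda_v(\vec x)\vec 1_v$ is a well-defined point of $|\K_{\le t}|$, and whenever $s>0$ the high part $\vec b(\vec x):=\frac1s\sum_{v\in H}\lambda_v(\vec x)\vec 1_v$ lies in the face spanned by $H$; one has $\vec x=(1-s)\vec a+s\vec b$ and, by linearity of $f^{PL}$, $f^{PL}(\vec x)=(1-s)f^{PL}(\vec a)+sf^{PL}(\vec b)$ with $f^{PL}(\vec a)\le t<f^{PL}(\vec b)$. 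I would then define the homotopy
$$
\h(\vec x,u)=\bigl(1-(1-u)s(\vec x)\bigr)\,\vec a(\vec x)+(1-u)s(\vec x)\,\vec b(\vec x),\qquad u\in[0,1],
$$
(with $\h(\vec x,u)=\vec x$ when $s=0$, where the $\vec b$-term is irrelevant). It equals the identity at $u=0$, fixes $|\K_{\le t}|$ pointwise throughout (there $s=0$ and $\vec a=\vec x$), and at $u=1$ sends $\vec x$ to $\vec a(\vec x)\in|\K_{\le t}|$. Since it only decreases the weight on high vertices and $f^{PL}(\vec b)>t\ge f^{PL}(\vec a)$, one gets $f^{PL}(\h(\vec x,u))\le f^{PL}(\vec x)\le t$, so the homotopy never leaves the sublevel set.

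The one point needing care — and the main obstacle — is that $\vec a$ carries the factor $(1-s)^{-1}$, so continuity of $\h$ requires $s$ to stay bounded away from $1$ on $\{f^{PL}\le t\}$. Here the finiteness of the complex and the PL hypothesis enter: the high vertices have values bounded below by $t':=\min\{f^{PL}(v):v\in H\}>t$, so $f^{PL}(\vec b)\ge t'$, and with $f_{\min}:=\min_{v}f^{PL}(v)$ the inequality $(1-s)f_{\min}+s\,t'\le f^{PL}(\vec x)\le t$ rearranges to $s(\vec x)\le (t-f_{\min})/(t'-f_{\min})<1$ (in the nontrivial case $L\neq\varnothing$, which guarantees $f_{\min}\le t<t'$). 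Thus $1-s$ is bounded below by a positive constant on the sublevel set, $\vec a$ and hence $\h$ are continuous, and $\h$ is the desired strong deformation retraction. The degenerate cases are consistent: if $H=\varnothing$ the two sets coincide, and if $L=\varnothing$ both are empty, so the conclusion holds trivially.
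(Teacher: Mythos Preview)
The paper does not give its own proof of this lemma; it simply quotes the result from K\"uhnel's survey \cite{Kuhnel90} and uses it as a black box in the subsequent arguments. Your explicit strong deformation retraction via barycentric coordinates is correct and is essentially the standard PL-topology proof of this fact, so there is nothing to compare against --- you have supplied what the paper omits.
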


\begin{lemma}\label{lemma:sub-level-critial}
Given an injective discrete Morse function, denote by $\epsilon_0= \min\{|f(\sigma)-f(\sigma')|:\sigma\ne \sigma'\}>0$.

If $\sigma$ is critical, then
$$ \{f^L\le t\}\cap \mathrm{star}(\vec1_\sigma)\simeq\begin{cases}\mathbb{S}^{\dim \sigma-1} ,& \text{ if }  f(\sigma)-\epsilon_0<t< f(\sigma),\\
\mathbb{B}^{\dim \sigma},& \text{ if }  f(\sigma)\le t<f(\sigma)+\epsilon_0,
\end{cases}
$$
where $\mathrm{star}(\vec1_\sigma)$ is the star of 
$\vec 1_\sigma$ in  $|S_\K|$.  
%
In consequence,   $\vec1_\sigma$ is a topological/metric critical point of $f^L|_{|S_\K|}$, and $f(\sigma)$ is a (symmetric) homological critical value.
\end{lemma}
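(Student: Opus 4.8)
The plan is to reduce everything to the local cone structure of $|S_\K|$ at the vertex $\vec 1_\sigma$ together with the monotonicity of $f^L$ along rays emanating from $\vec 1_\sigma$. First I would record two facts that drive the whole argument. (i) On each simplex $\conv(\vec 1_{\rho_0},\dots,\vec 1_{\rho_m})$ of $|S_\K|$ coming from a chain $\rho_0\subsetneq\cdots\subsetneq\rho_m$, the Lov\'asz extension is affine and interpolates the vertex values, so $f^L(\sum_j\mu_j\vec 1_{\rho_j})=\sum_j\mu_j f(\rho_j)$; in particular $f^L(\vec 1_\sigma)=f(\sigma)$. (ii) Writing a point of the (small) star as $\vec x=(1-s)\vec 1_\sigma+s\vec\ell$ with $\vec\ell$ in $\mathrm{link}(\vec 1_\sigma)=\mathbb{S}_-(\vec 1_\sigma)*U$ (where $U$ is the upper part) and $s\ge 0$ small, fact (i) gives $f^L(\vec x)=(1-s)f(\sigma)+sf^L(\vec\ell)$, so $f^L$ is monotone along the ray $s\mapsto\vec x$. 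By Lemma \ref{lemma:Morse-sigma}(1) the lower faces satisfy $f(\nu)\le f(\sigma)-\epsilon_0$ and the upper faces $f(\tau)\ge f(\sigma)+\epsilon_0$, whence $f^L(\vec p)\le f(\sigma)-\epsilon_0$ for $\vec p\in\mathbb{S}_-(\vec 1_\sigma)$ and $f^L(\vec u)\ge f(\sigma)+\epsilon_0$ for $\vec u\in U$.

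For $f(\sigma)-\epsilon_0<t<f(\sigma)$ I would show that $|S_\K|\cap\{f^L\le t\}\cap\mathbb{B}_{\vec 1_\sigma}$ deformation retracts onto a scaled copy of the lower link $\mathrm{link}_-(\sigma)$. Since $f^L(\vec 1_\sigma)=f(\sigma)>t$, the sublevel set misses a neighborhood of the apex, and by the monotonicity in (ii) it is swept out by exactly those rays with $f^L(\vec\ell)<f(\sigma)$, each contributing the tail $s\in[s^*(\vec\ell),s_{\max}(\vec\ell)]$ with $s^*(\vec\ell)=\frac{f(\sigma)-t}{f(\sigma)-f^L(\vec\ell)}$. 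The two descent directions of $f^L$ — increasing $s$, and decreasing the join-parameter toward $\mathbb{S}_-(\vec 1_\sigma)$ (legitimate because upper values strictly exceed lower ones) — can be combined into a deformation retraction onto $\mathbb{S}_-(\vec 1_\sigma)$; because $\epsilon_0$ is the minimal gap between distinct $f$-values and $f^L$ has no interior local extrema on cells, no critical value of $f^L$ other than $f(\sigma)$ lies in $(f(\sigma)-\epsilon_0,f(\sigma)+\epsilon_0)$, so all such $t$ give the same homotopy type. Invoking Lemma \ref{lemma:link-homotopy} (critical case) then yields $\mathrm{link}_-(\sigma)\simeq\mathbb{S}^{\dim\sigma-1}$, which is the first branch.

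For $f(\sigma)\le t<f(\sigma)+\epsilon_0$ the apex now satisfies $f^L(\vec 1_\sigma)=f(\sigma)\le t$, and I would show the sublevel set is contractible. Every simplex of the star has $\vec 1_\sigma$ as a vertex, so the straight segment from any $\vec x$ in the star to $\vec 1_\sigma$ stays inside a single simplex; there $f^L$ is affine with endpoint values $f^L(\vec x)\le t$ and $f(\sigma)\le t$, hence $f^L\le t$ throughout. Thus the straight-line homotopy $\vec x\mapsto(1-\theta)\vec x+\theta\vec 1_\sigma$ contracts the sublevel set to $\vec 1_\sigma$ inside $\{f^L\le t\}\cap\mathbb{B}_{\vec 1_\sigma}$; equivalently it retracts onto the lower star $\vec 1_\sigma*\mathrm{link}_-(\sigma)$, the cone on $\mathbb{S}^{\dim\sigma-1}$, which is $\mathbb{B}^{\dim\sigma}$. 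Here the bound $t<f(\sigma)+\epsilon_0$ guarantees that no second vertex of $\K$ enters the ball.

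Finally, the critical-point and critical-value assertions follow by comparing the two branches across $t=f(\sigma)$. The critical group is $C_q(f^L,\vec 1_\sigma)\cong H_q(\mathbb{B}^{\dim\sigma},\mathbb{S}^{\dim\sigma-1})$, nonzero precisely at $q=\dim\sigma$; by the definitions of the metric and topological settings, a point with a nontrivial critical group can be neither Morse regular nor of positive weak slope, so $\vec 1_\sigma$ is a metric/topological critical point of index $\dim\sigma$. Likewise the inclusion $\{f^L\le f(\sigma)-\epsilon\}\hookrightarrow\{f^L\le f(\sigma)+\epsilon\}$ induces on $H_{\dim\sigma-1}$ the non-isomorphism $H_{\dim\sigma-1}(\mathbb{S}^{\dim\sigma-1})\to H_{\dim\sigma-1}(\mathbb{B}^{\dim\sigma})=0$, so $f(\sigma)$ is a (symmetric) homological critical value (the case $\dim\sigma=0$ being handled with reduced homology and $\mathbb{S}^{-1}=\varnothing$). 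I expect the main obstacle to be the branch $t<f(\sigma)$: making the deformation retraction onto $\mathrm{link}_-(\sigma)$ rigorous — in particular controlling the radius bounds $s^*(\vec\ell)\le s_{\max}(\vec\ell)$ uniformly and checking that the two descent flows can be merged without leaving the small ball — is the only step that is not an immediate consequence of affineness together with Lemma \ref{lemma:link-homotopy}.
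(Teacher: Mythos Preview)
Your argument is correct and in fact more explicit than the paper's. The paper proceeds differently: rather than building the deformation retractions by hand, it invokes K\"uhnel's Lemma~\ref{lemma:Kuhnel}, which says that for a PL function the sublevel set $\{f^L\le t\}$ is homotopy equivalent to the full subcomplex of $S_\K$ induced on the vertices $\{\tau\in\K:f(\tau)\le t\}$. Since $f$ is injective with minimal gap $\epsilon_0$, for $t\in(f(\sigma)-\epsilon_0,f(\sigma))$ this induced subcomplex near $\vec 1_\sigma$ is exactly $\mathrm{link}_-(\sigma)$, and for $t\in[f(\sigma),f(\sigma)+\epsilon_0)$ it is $\mathrm{star}_-(\sigma)$; Lemma~\ref{lemma:link-homotopy} and Lemma~\ref{lemma:Morse-sigma} then identify these as $\mathbb{S}^{\dim\sigma-1}$ and $\mathbb{B}^{\dim\sigma}$. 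Your route avoids the external Lemma~\ref{lemma:Kuhnel} entirely by exploiting the cone structure and affineness directly; this is more elementary and self-contained, at the cost of the bookkeeping you flag in the $t<f(\sigma)$ branch. That bookkeeping is genuinely manageable: take $\mathbb{B}_{\vec 1_\sigma}$ to be the closed star (so $s\in[0,1]$), observe that the sublevel-in-star retracts along rays ($s\to 1$) onto the sublevel-in-link $\{\vec\ell:f^L(\vec\ell)\le t\}$, and then retract the latter onto $\mathbb{S}_-(\vec 1_\sigma)$ by pushing the join parameter $\lambda\to 0$; the bounds $f^L|_{\mathbb{S}_-}\le f(\sigma)-\epsilon_0<t$ and $f^L|_U\ge f(\sigma)+\epsilon_0$ make both steps well defined without any uniform-radius worries.

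One small point to tighten: your last paragraph computes the \emph{local} relative homology $H_*(\mathbb{B}^{\dim\sigma},\mathbb{S}^{\dim\sigma-1})$ but then asserts that the \emph{global} inclusion $\{f^L\le f(\sigma)-\epsilon\}\hookrightarrow\{f^L\le f(\sigma)+\epsilon\}$ fails to be a homology isomorphism. The passage from local to global needs one sentence: because $f$ is injective and $\epsilon_0$ is the minimal gap, $\vec 1_\sigma$ is the \emph{only} vertex of $S_\K$ with value in $(f(\sigma)-\epsilon_0,f(\sigma)+\epsilon_0)$, so (again by K\"uhnel, or by your own PL argument) the global sublevel set changes across $t=f(\sigma)$ precisely by attaching the lower star at $\vec 1_\sigma$, and excision identifies the global relative homology with your local critical group.
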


\begin{proof}
 Denote by
$$|S_\sigma|= \bigcup_{\text{maximal chain }\C\subset\mathcal{P}(\sigma)}\conv(\vec 1_{\nu}:\nu\in \C).$$
Then it can be checked that $|S_\sigma|$ is 
homeomorphic to the closed geometric simplex $|\overline{\sigma}|$ in $|\K|$, and thus it is homeomorphic to the disc $\mathbb{B}^{\dim \sigma}$. 
Since $\sigma$ is critical, by Lemma \ref{lemma:Morse-sigma} and the definition of $\epsilon_0$, for any $\nu\subsetneqq \sigma$, $f(\nu)\le f(\sigma)-\epsilon_0$. 
Hence, for $f(\sigma)-\epsilon_0<t< f(\sigma)$,  $|S_\sigma|\cap \mathrm{star}(\vec1_\sigma)\cap \{f^L<t\}$ is homotopy-equivalent to  
$$\bigcup_{\text{ chain }\C\subset\mathcal{P}(\sigma)\setminus\{\sigma\}}\conv(\vec 1_{\nu}:\nu\in \C)=\partial |S_\sigma|\cong  \mathbb{S}^{\dim \sigma-1}.$$ 
Similarly, for any $\tau\supsetneqq \sigma$, $f(\tau)\ge f(\sigma)+\epsilon_0>t+\epsilon_0$. Therefore, 
together with the piecewise linearity of $f^L$, one gets that $\mathrm{star}(\vec1_\sigma)\cap \{f^L<t\}$ is homotopy-equivalent to $|S_\sigma|\cap \mathrm{star}(\vec1_\sigma)\cap \{f^L<t\}$ and thus the proof is completed. 

The case of $f(\sigma)\le t<f(\sigma)+\epsilon_0$ is similar. 
For more details, we may apply Lemma \ref{lemma:Kuhnel} to $f^L$ on $|S_\K|$. Then we only need to check the homotopy type of $\mathrm{star}_-(\sigma)$ in $|S_\K|$ for $t\ge f(\sigma)$ and $\mathrm{link}_-(\sigma)$ for $t<f(\sigma)$. According to Lemma \ref{lemma:Morse-sigma} and similar to the proof of Lemma \ref{lemma:link-homotopy}, we obtain that for a critical point $\sigma$, $\mathrm{star}_-(\sigma)$ is
$$\bigcup_{\text{ chain }\C\subset\mathcal{P}(\sigma)}\conv(\vec 1_{\nu}:\nu\in \C)\cong \mathbb{B}^{\dim\sigma},$$
and $\mathrm{link}_-(\sigma)\cong \mathbb{S}^{\dim\sigma-1}$. The proof is completed.
\end{proof}

\begin{lemma}\label{lemma:weak-slope-metric}
If $(\sigma,\tau)$ is a regular pair, then $\left|df^L|_{|S_\K|}\right|(\vec1_\sigma)>0$ and $\left|df^L|_{|S_\K|}\right|(\vec1_\tau)>0$.
\end{lemma}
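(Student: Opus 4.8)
The plan is to exhibit, for each of the two points, an explicit descent deformation certifying the weak-slope definition, and the starting observation is that $f^L|_{|S_\K|}$ is genuinely piecewise linear: on the simplex $\conv(\vec1_\rho:\rho\in\C)$ attached to a chain $\C$ the integral definition gives $f^L(\sum_\rho\lambda_\rho\vec1_\rho)=\sum_\rho\lambda_\rho f(\rho)$, so $f^L$ is the affine interpolation of the vertex values $f(\rho)$. With this in hand the one-sided directional derivatives along edges of $S_\K$ are read off directly. First I would record the descent edges. From $\vec1_\sigma$ along $[\vec1_\sigma,\vec1_\tau]$ the value moves affinely from $f(\sigma)$ to $f(\tau)$, so the derivative is $f(\tau)-f(\sigma)<0$ by the regular-pair hypothesis; writing $\{v\}=\tau\setminus\sigma$ (a single vertex, with $\{v\}\in\K$), from $\vec1_\tau$ along $[\vec1_\tau,\vec1_{\{v\}}]$ the derivative is $f(\{v\})-f(\tau)<0$ by Lemma \ref{lemma:Morse-sigma}(2.2). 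Thus neither point is a local minimum of $f^L$ along $|S_\K|$, and both edges stay inside $|S_\K|$ since $\{v\}\subset\tau$ and $\sigma\subset\tau$ are chains.

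Next I would upgrade these single descent directions to a deformation of an entire neighborhood. The star of $\vec1_\sigma$ in $|S_\K|$ is the cone with apex $\vec1_\sigma$ over $\mathrm{link}(\vec1_\sigma)=\mathbb{S}_-(\vec1_\sigma)*U$, and every segment from $\vec1_\sigma$ to a link point lies in a single simplex, hence inside $|S_\K|$, with $f^L$ affine along it. I would use Lemma \ref{lemma:link-homotopy} — whose proof identifies the lower part of the link with $\mathbb{S}_-(\vec1_\sigma)*\vec1_\tau$ via a strong deformation retraction onto the direction of $\vec1_\tau$ — to build a simplexwise-affine homotopy $\Phi:(B_\delta(\vec1_\sigma)\cap|S_\K|)\times[0,1]\to|S_\K|$ pushing each point toward the sublevel set $\{f^L<f(\sigma)\}$ while keeping it in $|S_\K|$. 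Because $\Phi$ is affine on each simplex and the complex is finite, $f^L\circ\Phi$ decreases at a rate bounded below by a positive constant and $\Phi$ has a finite Lipschitz constant; reparametrising $t\mapsto\Phi_{\psi(t)}$ by a suitable multiple of arclength then yields a map $\h$ with $f^L(\h(\vec x,t))\le f^L(\vec x)-\epsilon t$ and $\dist(\h(\vec x,t),\vec x)\le t$ for a uniform $\epsilon>0$. This is exactly a certificate that $\left|df^L|_{|S_\K|}\right|(\vec1_\sigma)\ge\epsilon>0$. The argument for $\vec1_\tau$ is identical, now routing the deformation through the faces of $\tau$ containing $v$ and invoking $\mathrm{link}_-(\tau)\simeq\mathrm{pt}$ from the same lemma.

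The main obstacle is precisely the global gluing of the local descent directions while staying inside $|S_\K|$. The naive prescription ``move in the direction $\vec1_\tau-\vec1_\sigma=\vec1_v$'' fails, because at a point of the star lying on a simplex spanned by a bad coface $\tau'\supsetneq\sigma$ with $v\notin\tau'$ — where $f(\tau')>f(\sigma)$ by Lemma \ref{lemma:Morse-sigma}(2.1) — the vertex $\vec1_\tau$ is not present (no chain contains both $\tau$ and $\tau'$), so the straight push $\vec1_v$ leaves the complex and need not decrease $f^L$; there the correct descent is instead the intrinsic direction $\vec1_\sigma-\vec1_{\tau'}$, of derivative $f(\sigma)-f(\tau')<0$. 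Reconciling these incompatible prescriptions continuously across shared faces is the delicate point, and it is exactly what the contractibility of $\mathrm{link}_-(\sigma)$ supplied by Lemma \ref{lemma:link-homotopy} buys: it produces a single continuous, simplexwise-affine descent field on the punctured neighborhood, so that the bad cofaces are flowed \emph{past} through the complex rather than across it. Once that field is fixed, the uniformity of $\epsilon$ and the distance bound are routine consequences of finiteness and of the affineness of $f^L$ on each simplex.
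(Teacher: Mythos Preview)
Your proposal is correct and follows essentially the same approach as the paper: both arguments construct a locally decreasing flow with primary descent direction $\overrightarrow{\vec1_\sigma\vec1_\tau}$, both invoke Lemma~\ref{lemma:Morse-sigma}(2) to handle the simplices through bad cofaces $\tau'\not\supset v$, and both acknowledge that the continuous gluing of these local prescriptions across shared faces is a technical (if routine) step. Your write-up is in fact more explicit than the paper's, which simply states that ``slight perturbations and concrete approximations in the construction of the locally decreasing flow are necessary, but we omit the tedious and elementary process''; in particular, your identification of the obstruction at bad cofaces and of the alternative descent $\vec1_\sigma-\vec1_{\tau'}$ is exactly the ``small modification'' the paper alludes to. The one organizational difference is that you invoke Lemma~\ref{lemma:link-homotopy} (contractibility of $\mathrm{link}_-(\sigma)$) as the device for reconciling the incompatible descent directions, whereas the paper relies only on the combinatorial information of Lemma~\ref{lemma:Morse-sigma}(2); your framing is a legitimate and arguably cleaner way to package the gluing, though one should note that the homotopy-type statement of Lemma~\ref{lemma:link-homotopy} does not by itself yield a metric descent field---the uniform rate $\epsilon$ ultimately comes, as you say, from the finiteness of the complex and the affineness of $f^L$ on each simplex.
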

\begin{proof}
By the definition of weak slope, we should construct a locally decreasing flow from a neighborhood of $\vec1_\sigma$ to a neighborhood of $\vec1_\tau$.
\begin{enumerate}[{Case} 1.]
	\item Locally decreasing flow near $\vec1_\sigma$:\;  
	For any chain containing the pair $(\sigma,\tau)$, 
$f^L(\vec 1_\sigma)=f(\sigma)> f(\tau)=f^L(\vec 1_\tau)$, which means that  $f^L$ is decreasing along the vector $\overrightarrow{\vec 1_\sigma\vec1_\tau}$.  Then with the help of Lemma  \ref{lemma:Morse-sigma} (2), the neighborhood of $\vec1_\sigma$ on $|S_\K|$ 
can be decreased uniformly along the direction $\overrightarrow{\vec 1_\sigma\vec1_\tau}$ with a small  modification. Precisely, we equip  $|S_\K|$ with the shortest path distance `$\mathrm{dist}$' induced by the usual Euclidean metric on $\R^n$.  Then, for sufficiently small  $\delta<\frac19$ and an open ball $ B_\delta(\vec 1_\sigma)$  in $|S_\K|$, we   define the locally decreasing flow 
$$\h:B_\delta(\vec1_\sigma)\times[0,\delta]\to |S_\K|$$
determined by $\h(\vec x,t)=(1-t)\vec x +t\vec1_\tau$ if $\vec x\in B_\delta(\vec 1_\sigma)\cap\mathrm{star}(\vec1_\tau) ,\,t\in[0,\delta]$, and $$\h(\vec x,t)=\vec x+t\frac{\mathrm{proj}(\vec x)-\vec x}{\|\mathrm{proj}(\vec x)-\vec x\|_2}$$ when 
$\vec x\in B_\delta(\vec 1_\sigma) 
\setminus\mathrm{star}(\vec1_\tau)$ and $0\le t\le\min\{\delta,\|\mathrm{proj}(\vec x)-\vec x\|_2\}$, 
 where $\mathrm{proj}(\vec x)$ is the projection of $\vec x$ onto  $\mathrm{star}(\vec1_\tau)\cap \triangle(\vec x)$, and  $\triangle(\vec x)$ is the smallest simplex of $|S_\K|$  containing $\vec x$. We refer to Fig.~\ref{Fig:1} for a   sketch of the picture of the construction of the local flow $\h$.
\begin{figure}[H]
\centering
{\color{black}\begin{tikzpicture}[scale=2]
\draw[thin,yellow] (0,0)--(0,2);
\draw[thin,yellow] (1.732,1)--(0,2);
\draw[thin,yellow] (1.732,1)--(0,0);
\draw[thin,yellow] (-1.732,1)--(0,2);
\draw[thin,yellow] (-1.732,1)--(0,0);
\filldraw[black,fill opacity=0.04,line width=0,draw =red!0]  (0,0)--(0,2)--(1.732,1);
\filldraw[black,fill opacity=0.02,line width=0,draw =red!0]  (0,0)--(0,2)--(-1.732,1);
\node (123) at  (0.58,1.2) {$\vec1_\tau$};\node (123) at  (0.577,1) {{\color{blue}\tiny $\bullet$}};
\node (134) at  (-0.6,1.2) {$\vec1_{\tau'}$};\node (134) at  (-0.577,1) {{\color{blue}\tiny$\bullet$}};
\node (s) at  (0.1,1.2) {$\vec1_{\sigma}$};\node (s) at  (0,1) {{\color{red}\tiny$\bullet$}};
\draw[thin,green] (-1.732,1) -- (1.732,1);
\draw[thin,green] (-0.577,1)--(0,0)--(0.577,1)--(0,2)--(-0.577,1);
\draw[thin,green] (0.866,1.5) -- (0.577,1) -- (0.866,0.5);
\draw[thin,green] (-0.866,1.5) -- (-0.577,1) -- (-0.866,0.5);
\draw[dotted] (0,1)  circle(0.15);
\draw[->,purple] (-0.12,1.1) to (0,1.1);
\draw[->,purple] (-0.12,0.9) to (0,0.9);
\draw[->,purple] (-0.15,1) to (0,1);
\draw[->] (0,1.1) to (0.288,1.05);
\draw[->] (0,0.9) to (0.288,0.95);
\draw[->] (0,1) to (0.288,1);
\end{tikzpicture}   } 
\caption{\label{Fig:1} This picture visually  illustrates the construction of a locally decreasing flow near $\vec1_\sigma$ in  the proof of Lemma \ref{lemma:weak-slope-metric}. In a sufficiently small neighborhood $B_\delta(\vec 1_\sigma)\cap\mathrm{star}(\vec1_\tau)$ of $\vec1_\sigma$, the piecewise linear flow  in  $B_\delta(\vec 1_\sigma)\cap\mathrm{star}(\vec1_\tau)$ goes towards $\vec1_\tau$ (see the black line in the picture), while the flow line in  $B_\delta(\vec 1_\sigma)\setminus\mathrm{star}(\vec1_\tau)$ is orthogonal to $\mathrm{star}(\vec1_\tau)$ (see the red line in the picture). It follows from $f^L(\vec1_{\tau'})>f^L(\vec1_{\sigma})>f^L(\vec1_{\tau})$ 
for any  $\tau'\supsetneqq \sigma$ with $\tau'\not\supset \tau$, that $f^L$ is decreasing along the  local flow.}
\end{figure}
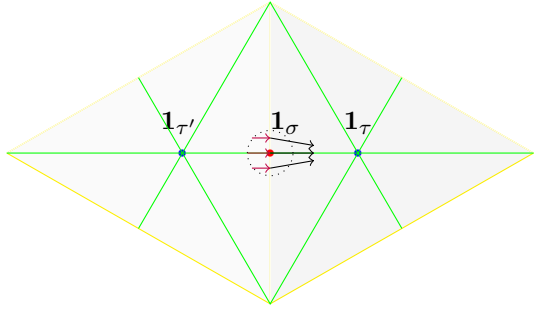

According to Lemma \ref{lemma:Morse-sigma} (2),  $f^L(\vec1_{\tau'})=f(\tau')>f^L(\vec1_{\sigma})=f(\sigma)>f(\tau)=f^L(\vec1_{\tau})$ for any  $\tau'\supsetneqq \sigma$ with $\tau'\not\supset \tau$. Then, one can check that there exists $\epsilon>0$ 
satisfying $$f^L(\h(\vec x,t))\le f^L(\vec x)-\epsilon t,\;\; \dist(\h(\vec x,t),\vec x)\le t$$
for any $\vec x\in B_\delta(\vec 1_\sigma)$ and $t\in[0,\delta]$. We then  complete the construction of a locally decreasing flow near $\vec1_\sigma$.  
	
	
	\item Locally decreasing flow near $\vec1_\tau$:\; The construction  depends on Lemma \ref{lemma:Morse-sigma} (2), as in Case 1. Slight perturbations and concrete approximations in the construction of the locally decreasing flow are necessary, but we omit the tedious and elementary process which is similar to that of Case 1. 
\end{enumerate}

By the deformation lemma, $\vec 1_\sigma$ is Morse regular, and by the piecewise linearity of $f^L$, $\vec 1_\sigma$ is not a critical point in the sense of weak slope. 
Moreover,  points on $|S_\K|$ other  than vertices of $|S_\K|$  cannot be critical points  of $f^L$ if $f$ is injective.
\end{proof}

\begin{theorem}\label{thm:critical-discrete-Morse}
Given a finite simplicial complex with vertex set $V$ and face set $\K$, let $f:\K\to\R$ be a discrete Morse function.

If $\sigma$ is a critical point of $f$, then $\vec1_\sigma$ is a critical point of $f^L|_{|S_\K|}$ with the same index in the sense of topological/metric/PL critical point theory, and the converse holds if $f$ is further assumed to be injective. 
\end{theorem}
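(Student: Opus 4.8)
The plan is to assemble the preparatory lemmas into the two implications, handling the three notions of continuous criticality (metric, topological, PL) in parallel. The key structural input from Forman's theory is the dichotomy that, since $f$ is a discrete Morse function, every face $\sigma\in\K$ is either critical or belongs to exactly one regular pair (as its lower or its upper element); this partition of the non-critical faces lets me examine each vertex $\vec 1_\sigma$ of the order complex $|S_\K|$ one at a time. Throughout I would use that $f^L|_{|S_\K|}$ is precisely the PL function on the geometric complex $|S_\K|$ taking the value $f(\sigma)$ at each vertex $\vec 1_\sigma$ and affine on every simplex, so that the PL framework of K\"uhnel and Edelsbrunner applies verbatim and the local sublevel sets are exactly those computed in Lemmas~\ref{lemma:link-homotopy} and~\ref{lemma:sub-level-critial}.

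For the forward implication I would suppose $\sigma$ is critical with $\dim\sigma=i$. Lemma~\ref{lemma:sub-level-critial} already shows the localized sublevel set $|S_\K|\cap\{f^L\le t\}\cap\mathbb{B}_{\vec 1_\sigma}$ passes from $\simeq\mathbb{S}^{i-1}$ (for $t$ just below $f(\sigma)$) to $\simeq\mathbb{B}^{i}$ (for $t$ just above), so $\vec 1_\sigma$ is a metric and a topological critical point and $f(\sigma)$ is a (symmetric) homological critical value. To pin down the index I would compute the critical group: by excision and the local cone structure the pair $(\{f^L\le f(\sigma)\}\cap U,\,\{f^L\le f(\sigma)\}\cap U\setminus\{\vec 1_\sigma\})$ is homotopy equivalent to $(\mathbb{B}^{i},\mathbb{S}^{i-1})$, whence $C_q(f^L,\vec 1_\sigma)=H_q(\mathbb{B}^i,\mathbb{S}^{i-1})$ is $\mathbb{Z}$ for $q=i$ and $0$ otherwise, giving metric/topological index $i$ with total multiplicity $1$. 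For the PL index I would invoke Lemma~\ref{lemma:link-homotopy}: since $\mathrm{link}_-(\vec 1_\sigma)\simeq\mathbb{S}^{i-1}$, Edelsbrunner's formulation yields $\beta'_{i-1}=1$ and all other reduced Betti numbers of $\mathrm{link}_-$ zero, so the PL index is $i$ with multiplicity $1$. This settles the three notions at once.

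For the converse I would assume $f$ injective and show that any continuous critical point is some $\vec 1_\sigma$ with $\sigma$ critical. First, the closing remark in the proof of Lemma~\ref{lemma:weak-slope-metric} guarantees that no point of $|S_\K|$ other than a vertex $\vec 1_\sigma$ can be critical, so it suffices to test vertices. If $\sigma$ is not critical, then by Forman's dichotomy it is the lower or the upper element of a unique regular pair; Lemma~\ref{lemma:weak-slope-metric} then gives $\left|df^L|_{|S_\K|}\right|(\vec 1_\sigma)>0$ and, via the deformation argument there, shows $\vec 1_\sigma$ is Morse regular, so it is neither a metric nor a topological critical point, while Lemma~\ref{lemma:link-homotopy} ($\mathrm{link}_-(\vec 1_\sigma)\simeq\mathrm{pt}$, with vanishing reduced homology) shows it is not a PL critical point either. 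Hence criticality of $\vec 1_\sigma$ forces $\sigma$ critical, and the index identity is the one already established in the forward direction.

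The main obstacle is conceptual rather than computational: one must verify that the three continuous theories really detect the same point with the same index. This reduces entirely to the two homotopy-type computations isolated in Lemmas~\ref{lemma:link-homotopy} and~\ref{lemma:sub-level-critial} (the descending-link computation and the sphere-to-ball transition), so the only genuine care needed is the excision/cone argument identifying the metric critical group with $H_*(\mathbb{B}^i,\mathbb{S}^{i-1})$, together with the bookkeeping that Forman's regular pairs exhaust all non-critical faces.
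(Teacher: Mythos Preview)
Your proposal is correct and takes essentially the same approach as the paper: the paper's proof is literally the one sentence ``The proof is a combination of Lemmas~\ref{lemma:link-homotopy}, \ref{lemma:sub-level-critial} and \ref{lemma:weak-slope-metric},'' and you have simply spelled out how that combination goes, adding the explicit critical-group identification $C_q(f^L,\vec 1_\sigma)\cong H_q(\mathbb{B}^i,\mathbb{S}^{i-1})$ and the appeal to Forman's exclusion lemma to exhaust the non-critical faces. One small point worth flagging: Lemmas~\ref{lemma:link-homotopy} and~\ref{lemma:sub-level-critial} are stated under the standing hypothesis that $f$ is injective, while the forward implication of the theorem does not assume this; you (like the paper) are implicitly using that the \emph{critical-case} conclusions of those lemmas only rely on Lemma~\ref{lemma:Morse-sigma}(1), which needs no injectivity, and that the analysis is local on the star of $\vec 1_\sigma$.
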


\begin{proof}
The proof is a combination of Lemmas \ref{lemma:link-homotopy}, \ref{lemma:sub-level-critial} and \ref{lemma:weak-slope-metric}.
\end{proof}

\begin{defn}
If a generic discrete Morse function $f:\K\to\R$ has  $n_i$ critical points of index $i$, we say that both $f$ and $\K$ have {\sl discrete Morse vector}\footnote{Every discrete Morse function  $f$ corresponds to a unique discrete Morse vector, while  there can be a
variety of discrete Morse vectors on a  simplicial complex $\K$ for different $f$. Similar presentations 
work for a
Lipschitz function on a piecewise flat metric space, that is, we refer to the Morse vector as being of both the  function and the space.} $\vec c=(n_0,n_1,\cdots,n_d)$. Similarly, for a generic Lipschitz function on a piecewise flat metric space $M$ having $n_i$  critical points of index $i$,  we say that both $f$ and $M$ have {\sl Morse vector} $\vec c=(n_0,n_1,\cdots,n_d)$.
\end{defn}


Now we verify that the discrete Morse structure on a simplicial complex is equivalent to the continuous Morse structure on the restricted domain of its Lov\'asz extension. The key idea is to translate it into PL  Morse theory by 
barycentric subdivision. This reveals  
the  relation between the discrete Morse vectors of $\K$ and the  Morse vectors of $|S_\K|$. Such a result is relevant to 
the main results in \cite{Benedetti16}, but we develop it here in a wider context.

\begin{theorem}\label{thm:Morse-vector-f}
Given a finite simplicial complex with vertex set $V$ and face set $\K$, let $f:\K\to\R$ be an injective discrete Morse function. Then the discrete Morse vector of $f$ agrees with the continuous  Morse vector of $f^L|_{|S_\K|}$. In particular, every discrete Morse vector on $\K$ is a Morse vector on $|S_\K|$. 
\end{theorem}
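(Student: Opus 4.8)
The plan is to reduce the equality of the two Morse vectors to a single index-preserving bijection between the critical simplices of $f$ and the critical points of $f^L|_{|S_\K|}$, and then to check that this bijection is both complete (it captures every critical point of $f^L|_{|S_\K|}$) and non-degenerate (each critical point carries a single well-defined index with multiplicity one). The half that assigns to each critical simplex a critical point of the extension is already supplied by Theorem \ref{thm:critical-discrete-Morse}: for injective $f$, a simplex $\sigma$ with $\dim\sigma=i$ is critical in Forman's sense if and only if $\vec1_\sigma$ is a critical point of $f^L|_{|S_\K|}$ of index $i$, simultaneously in the metric, topological and PL senses. This yields an index-preserving injection from index-$i$ critical simplices into index-$i$ critical points of $f^L|_{|S_\K|}$.

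The first genuine step is to upgrade this injection to a bijection onto the whole critical set, i.e. to establish completeness. Here I would invoke the final assertion of Lemma \ref{lemma:weak-slope-metric}: when $f$ is injective, no point of $|S_\K|$ other than a vertex $\vec1_\sigma$ can be critical for $f^L|_{|S_\K|}$. This rules out any phantom critical points in the interiors of the cells $\conv(\vec1_\sigma:\sigma\in\C)$, so every critical point of the extension is of the form $\vec1_\sigma$; combined with the converse direction of Theorem \ref{thm:critical-discrete-Morse}, such a $\sigma$ must itself be critical for $f$. Hence the injection above is in fact a bijection.

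The second step is to confirm that each critical point contributes to exactly one coordinate of the continuous Morse vector. This is where I would use Lemma \ref{lemma:sub-level-critial}, or equivalently Lemma \ref{lemma:link-homotopy} through the PL picture: for a critical $\sigma$ of dimension $i$ the local sublevel set passes from $\mathbb{S}^{i-1}$ to $\mathbb{B}^{i}$ as $t$ crosses $f(\sigma)$, so the critical group is $\mathbb{F}$ in degree $i$ and vanishes in every other degree; equivalently $\mathrm{link}_-(\vec1_\sigma)\simeq\mathbb{S}^{i-1}$ has reduced homology of rank one concentrated in degree $i-1$. Thus $\vec1_\sigma$ is non-degenerate, of total multiplicity one and single index $\dim\sigma$. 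It is convenient to recall here that $S_\K$ is combinatorially the barycentric subdivision of $\K$ and that $f^L|_{|S_\K|}$ is the PL interpolation of the vertex data $\vec1_\sigma\mapsto f(\sigma)=f^L(\vec1_\sigma)$; injectivity of $f$ makes these vertex values pairwise distinct, so $f^L|_{|S_\K|}$ is a generic PL function and the PL-Morse bookkeeping via $\mathrm{link}_-$ applies cleanly, even though $|S_\K|$ need not be a manifold.

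Assembling the three steps, for each $i$ the number $n_i$ of index-$i$ critical simplices of $f$ equals the number of index-$i$ critical points of $f^L|_{|S_\K|}$, which is exactly the assertion that the discrete Morse vector of $f$ agrees with the continuous Morse vector of $f^L|_{|S_\K|}$. The step demanding the most care is the completeness claim: excluding critical points at non-vertices of $|S_\K|$ is the content of the final, only-sketched line of Lemma \ref{lemma:weak-slope-metric}, and making it airtight requires the explicit locally decreasing flows along the chain directions $\overrightarrow{\vec1_\sigma\vec1_\tau}$ in the open cells, together with the piecewise-linearity of $f^L$, which forces the weak slope to be constant—hence strictly positive—on the interior of each face. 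Everything else is counting.
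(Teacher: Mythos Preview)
Your argument is correct and complete for the stated claim: once Theorem~\ref{thm:critical-discrete-Morse} gives the index-preserving bijection between critical simplices of $f$ and vertex critical points of $f^L|_{|S_\K|}$, the final line of Lemma~\ref{lemma:weak-slope-metric} rules out non-vertex critical points, and Lemma~\ref{lemma:sub-level-critial} pins down non-degeneracy; the Morse vector equality then reduces to counting. The paper, however, does not argue this way. Instead it makes the barycentric-subdivision observation central rather than incidental: it identifies $(\K,S_\K)$ with $\mathrm{sd}(V,\K)$, identifies $f^L|_{|S_\K|}$ with the PL interpolation $\hat f^{\,PL}$ of the vertex function $\hat f(v_\sigma)=f(\sigma)$, and then uses K\"uhnel's Lemma~\ref{lemma:Kuhnel} to obtain the filtration-level homotopy equivalence $|\K(\{f\le t\})|\simeq\{f^L|_{|S_\K|}\le t\}$ for all $t$, from which the Morse-vector coincidence is read off. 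Your route is shorter and more transparent for the bare vector equality, and it cleanly isolates where injectivity is used; the paper's route buys something strictly stronger---equivalence of the entire sublevel filtrations, hence of persistent homology and of the full Morse relations---at the cost of invoking the collapse of $|\K(\{f\le t\})|$ onto the induced subcomplex on $\{\hat f\le t\}$, which your argument never needs.
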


\begin{proof}
The simplicial complex $(\K,S_\K)$ is simplicially equivalent to the simplicial complex obtained by the barycentric subdivision of  $(V,\K)$:
 $$
 \xymatrix{ (\K,S_\K)\ar@{<=>}[rr]^{\text{simplicially}}_{\text{equivalent}} & &\mathrm{sd}(V,\K)\ar@{<=>}[ll] }
 $$
 where $\mathrm{sd}(V,\K)$ is the barycentric subdivision of the complex
 $(V,\K)$. Here two complexes are called {\sl simplicially equivalent} (or
 combinatorially equivalent) if their face posets\footnote{The {\sl face
     poset} of a complex is the set of all of its simplices, ordered by
   inclusion. } are isomorphic as posets. The reason for this
   equivalence is simply that the barycenter of a simplex in $\K$ corresponds
   to $\K$ as a vertex in $S_\K$, and when one simplex is a facet of another,
   their barycenters are connected by an edge in $\mathrm{sd}(V,\K)$.


Thus, one may redefine a discrete function $\hat{f}$ on the vertex set of the barycentric subdivision
\begin{center}
\begin{tikzpicture}[node distance=4.5cm]

\node (convex) [startstop] { $\hat{f}:\mathcal{V}(\mathrm{sd}(\K))\to \R$ };

\node (submodular) [startstop, right of=convex, xshift=1.6cm]  { $f:\K\to \R$ };

\draw [arrow](convex) --node[anchor=south] { \small equivalent } (submodular);
\draw [arrow](submodular) --node[anchor=south] {   } (convex);
\end{tikzpicture}
\end{center}
 via $\hat{f}(v_\sigma)=f(\sigma)$, $\forall \sigma\in \K$, where $\mathcal{V}(\mathrm{sd}(\K))$ is the vertex set of $\mathrm{sd}(\K)$. This is essentially
the viewpoint of  Zaremsky when defining a Bestvina-Brady discrete
Morse function in \cite{Zaremsky}. 

Then the Lov\'asz extension $f^L$ is piecewise-linearly equivalent to the piecewise linear extension $\hat{f}^{PL}$ defined by $$\hat{f}^{PL}\left(\sum_{v\in F} t_vv\right)=\sum_{v\in F}t_v\hat{f}(v)$$
 for any face $F$ of the simplicial  complex obtained by the   
 barycentric subdivision of $\K$  
 and any $t_v\ge 0$ with $\sum_{v\in F}t_v=1$.
Combining the above observations, we get the following commutative diagram:
$$
\xymatrix{ f\ar@{<=>}[rrr]\ar@{=>}[d]_{\text{Lov\'asz extension}} & & & \hat{f}\ar@{<=>}[lll]\ar@{=>}[d]^{\text{PL extension}} \\
f^L|_{|S_\K|}\ar@{<=>}[rrr]_{\text{PL equivalent}} & & & \hat{f}^{PL}\ar@{<=>}[lll] }
$$
from which we derive that the Morse data of $f^L|_{|S_\K|}$ and $\hat{f}^{PL}$ are entirely equivalent, and furthermore, the (continuous) Morse structures of $|S_\K|$ and $|\mathrm{sd}(\K)|$ essentially agree with each other.

It is clear that $\{f^L|_{|S_\K|}\le t\}$ is homeomorphic to $\{\hat{f}^{PL}\le t\}$.
Applying Lemma \ref{lemma:Kuhnel}, $\{\hat{f}^{PL}\le t\}$ is  homotopy-equivalent to the induced subcomplex on the sublevel set $\{\hat{f}\le t\}$. Note that the level subcomplex induced by $\{f\le t\}$  collapse onto the induced subcomplex on the sublevel set $\{\hat{f}\le t\}$. So we have
$$|\K(\{f\le t\})|\simeq |\mathrm{Sd}_\K(\{\hat{f}\le t\})|\simeq \{\hat{f}^{PL}\le t\}\cong \{f^L|_{|S_\K|}\le t\}$$
 and thus the statement is proved. 
\end{proof}

Theorems \ref{thm:critical-discrete-Morse} and \ref{thm:Morse-vector-f} establish a correspondence between the geometric data of a discrete Morse function and  the geometric information of its Lov\'asz extension. 


\vspace{0.19cm}

As a special homotopy invariant, the Lusternik-Schnirelmann category was created in order to provide estimates on the number of critical points for any smooth function on the manifold. While the Lusternik-Schnirelmann theory was mainly used in topology and analysis, it had far-reaching consequences in geometry as well, such as the well-known results on existence of multiple closed geodesics on manifolds.

We shall now introduce the concept of a category in the sense of critical point theory  on an abstract simplicial complex $(V,\K)$ at level $m$. We recall the classical Lusternik-Schnirelman category  (see \cite{LS34,CLP03,FMV15}) of a closed subset $S\subset |S_\K|$:
\begin{align*}
\mathrm{cat}(S):=\min\{k\in \mathbb{N}^+:&\exists k+1 \text{  closed subsets }U_0,U_1,\cdots,U_k \text{  contractible in\footnotemark[8] }|S_\K|
\\&\text{ and  }\cup_{i=0}^k U_i\supset S\},
\end{align*}
where a subset   $U\subset |S_\K|$ is {\sl contractible in $|S_\K|$}  if there exists a continuous map $\eta:[0,1]\times |S_\K|\to |S_\K|$ such that $\eta(0,\cdot)=\mathrm{id}_{|S_\K|}$ and $\eta(1,U)=\text{ one-point set}$.  
With the aid of the Lusternik-Schnirelman  category, we introduce a   families of subsets of $\K$:
\footnotetext[8]{The references \cite{CLP03,FMV15} refer to the sets $U_0,\cdots, U_k$ as {\sl categorical}  sets, i.e.,  the  inclusion map $U_i\hookrightarrow |S_\K|$ is null-homotopic, $\forall i$.}
\begin{equation}\label{eq:LSC-K}
\mathsf{LSC}_m(\K)=\{L\subset\K:\;\mathrm{cat}(|S_\K(L)|)\ge m\},\;m=0,1,\cdots,
\end{equation}
where $S_\K(L)$ is the induced subcomplex of $S_\K$ on $L$. Note that this is a family of subsets of $\power(\K)$ (not $\power(V)$!). \; Similarly,
\begin{equation}\label{eq:LSC-S_K}
\mathsf{LSC}_m(|S_\K|)=\{S\subset|S_\K|:\;\mathrm{cat}(S)\ge m,\,S\text{ is closed}\}
\end{equation}

We are now ready to establish a Lusternik-Schnirelman category theorem relating a discrete Morse function and its Lov\'asz extension:
\begin{theorem}[L-S category theorem for a discrete Morse function and its Lov\'asz extension]
	\label{thm:LS-category-discrete-Morse-Lovasz}
	Let $f:\K\to\R$ be a  discrete Morse function. Then we have a sequence of critical values:
	$$
	\min\limits_{L\in\mathsf{LSC}_m(\K)}\max\limits_{\sigma\in L} f(\sigma)=\inf\limits_{S\in \mathsf{LSC}_m(|S_\K|)}\sup\limits_{\vec x\in S} f^L(\vec x),\;\;\;m=0,1,\cdots,\dim\K.
	$$
\end{theorem}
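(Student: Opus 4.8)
The plan is to rewrite each side as the threshold value of $t$ at which the (ambient) Lusternik--Schnirelman category of a sublevel set first reaches $m$, and then to identify the two families of sublevel sets up to an ambient deformation retraction. Throughout I work on the compact complex $|S_\K|$ and use that $f$ takes only finitely many values, so all the infima below are attained.

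First I would record a purely formal reformulation of each min--max value, using only monotonicity of category under inclusion. On the discrete side, $\mathrm{cat}(|S_\K(\cdot)|)$ is monotone in the vertex set, and the largest admissible $L$ contained in $\{f\le t\}$ is $\{f\le t\}$ itself; on the continuous side $S\subset\{f^L\le\sup_S f^L\}$ and $\mathrm{cat}(\cdot)$ is monotone under inclusion of subsets of $|S_\K|$. Comparing an arbitrary competitor with the sublevel set containing it then yields
\begin{align*}
\min_{L\in\mathrm{Cat}_m(\K)}\max_{\sigma\in L} f(\sigma) &= \min\{t : \mathrm{cat}(|S_\K(\{f\le t\})|)\ge m\},\\
\inf_{S\in \mathrm{Cat}_m(|S_\K|)}\sup_{\vec x\in S} f^L(\vec x) &= \inf\{t : \mathrm{cat}(\{f^L|_{|S_\K|}\le t\})\ge m\}.
\end{align*}

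The heart of the matter is then to prove, for every $t$, that
$$\mathrm{cat}\bigl(|S_\K(\{f\le t\})|\bigr)=\mathrm{cat}\bigl(\{f^L|_{|S_\K|}\le t\}\bigr).$$
Here I would use that $f^L|_{|S_\K|}$ is exactly the PL extension $\hat f^{PL}$ on the order complex with vertex values $f^L(\vec1_\sigma)=f(\sigma)$ (as established in the proof of Theorem \ref{thm:Morse-vector-f}), so that the full subcomplex of $S_\K$ on the vertex set $\{\vec1_\sigma:f(\sigma)\le t\}$ is precisely $|S_\K(\{f\le t\})|$. Lemma \ref{lemma:Kuhnel}, applied to $f^L$ on $|S_\K|$, says that the sublevel set $\{f^L|_{|S_\K|}\le t\}$ admits this full subcomplex as a PL deformation retract inside $|S_\K|$. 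I would then verify the elementary principle that an ambient deformation retraction of $B$ onto $A$, with $A\subset B\subset|S_\K|$, preserves the ambient L-S category: given a cover of $A$ by sets null-homotopic in $|S_\K|$, pulling it back through the retraction $H_1\colon B\to A$ produces a cover of $B$ of the same cardinality, and the deformation $H$ exhibits the required null-homotopies of its members in $|S_\K|$; combined with monotonicity this forces $\mathrm{cat}(A)=\mathrm{cat}(B)$.

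Combining the two steps, the thresholds on the two sides coincide for every $m$, which gives the asserted identity. The main obstacle is precisely the category equality of the second step: L-S category is \emph{not} a homotopy invariant but depends on the embedding into $|S_\K|$, so it is essential that Lemma \ref{lemma:Kuhnel} furnishes an honest ambient deformation retraction rather than a mere abstract homotopy equivalence, and that such a retraction transports contractible-in-$|S_\K|$ covers. A secondary point to check is that $\mathrm{cat}(\{f^L|_{|S_\K|}\le t\})$ is constant on each interval between consecutive values of $f$ — which follows from the same deformation-retraction argument applied across a regular interval — so that the continuous threshold is realized at some $f(\sigma)$ and matches the discrete minimum exactly.
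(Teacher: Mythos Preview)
Your proof is correct and follows essentially the same route as the paper: both reduce each side to the ambient category of sublevel sets and then invoke K\"uhnel's Lemma~\ref{lemma:Kuhnel} to identify $\mathrm{cat}(\{f^L|_{|S_\K|}\le t\})$ with $\mathrm{cat}(|S_\K(\{f\le t\})|)$. Your threshold reformulation via monotonicity is in fact cleaner than the paper's perturbation argument (which pushes the maximum onto a vertex), and your explicit remark that one needs an \emph{ambient} deformation retraction---not just a homotopy equivalence---to preserve $\mathrm{cat}$ is a welcome clarification that the paper glosses over.
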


\begin{proof}
Without loss of generality, we may assume that $f:\K\to\R$ is  an injective   discrete Morse function. This is because any discrete Morse function can easily be
replaced by a 1-1 discrete Morse function that has the same induced gradient
vector field (up to a reasonable notion of equivalence).   
For any $S\in \mathsf{LSC}_m(|S_\K|)$, $f^L$ achieves a  maximum on $S$ at some point $\vec s$, that is,  $f^L(\vec s)=\sup\limits_{\vec x\in S} f^L(\vec x)$. If $\vec s$ does not belong to the vertex set of $|S_\K|$, then $\vec s$ is not an inner point of $S$ according to the definition of $f^L$. So $\vec s\in\partial S\setminus \mathrm{Vertex}(|S_\K|)$, and thus we can take a small perturbation $S'$ of $S$ such that $S'\in \mathsf{LSC}_m(|S_\K|)$ and $\sup f^L(S')<f(\vec s)=\sup f^L(S)$. Therefore, we only need to consider such $S$ with the property that $\max_{\vec x\in S} f^L(\vec x)$ is achieved at some vertex $\vec v$  of $|S_\K|$, i.e., 
$f^L(\vec v)\ge f^L(\vec x)$, $\forall \vec x\in S$. 
Consider the sublevel set $\{f^L\le f(\vec v)\}$. It is clear that $\mathrm{cat}(\{f^L\le f(\vec v)\})\ge \mathrm{cat}(S)\ge m$ and $\max f^L(\{f^L\le f(\vec v)\})=f(\vec v)=\max f^L(S)$.

\vspace{0.06cm}

\textbf{Claim}. $\mathrm{cat}(\{f^L\le a\})=\mathrm{cat}(S_\K|_{\{\sigma\in \K:f(\sigma)\le a\}})$, where $S_\K|_{\{\sigma\in \K:f(\sigma)\le a\}}$ is the induced closed subcomplex of $S_\K$ on the vertices $\{\sigma\in \K:f(\sigma)\le a\}$ of $S_\K$.

\vspace{0.03cm}

\textbf{Proof}. In fact, by Lemma \ref{lemma:Kuhnel}, there is a homotopy equivalence between $\{f^L\le a\}$ and $S_\K|_{\{\sigma\in \K:f(\sigma)\le a\}}$. 

\vspace{0.06cm}

By the above claim, we establish the following identities
\begin{align*}
\inf\limits_{S\in \mathsf{LSC}_m(|S_\K|)}\sup\limits_{\vec x\in S} f^L(\vec x)&=\inf\limits_{a\in\R\text{ s.t. }\{f^L\le a\}\in \mathsf{LSC}_m(|S_\K|)}\sup\limits_{\vec x\in \{f^L\le a\}} f^L(\vec x)
\\&=\min\limits_{a\in\R\text{ s.t. }S_\K|_{\{\sigma\in \K:f(\sigma)\le a\}}\in\mathsf{LSC}_m(\K)\;\;}\max\limits_{\;\;\sigma\in S_\K|_{\{\sigma\in \K:f(\sigma)\le a\}}}f(\sigma)
\\&=\min\limits_{L\in\mathsf{LSC}_m(\K)}\max\limits_{\sigma\in L} f(\sigma).
\end{align*}
\end{proof}

We point out that our concept of discrete Lusternik-Schnirelman category for abstract simplicial complexes is different from that of  Definition 4.3 in \cite{DENV19}. 

\section{Discrete Morse theory on complex-like  hypergraphs}
\label{sec:Morse-hypergraph}
In the preceding, we have established a correspondence between the discrete Morse theory on a simplicial complex $\K$ with vertex set $V$ and the continuous Morse theory on the associated order complex $S_\K$. Since the order complex $S_\E$ is still a simplicial complex when $\E$ is only a hypergraph with vertex set $V$, we can use the continuous Morse theory on that complex to define a discrete Morse theory on $\E$. That is what we shall now do.

A {\it hypergraph} is a pair $(V,\E)$ with 
 $\E\subset \power(V)$. In other words, $\E$ is a general set family on $V$.  We study  the combinatorial structure of a hypergraph from a topological perspective.
 
Since the philosophy of Morse theory is to understand the topology  by functions, we should  make clear what is the  topology on a hypergraph. Section \ref{sec:topology-hypergraph} indicates that it is nice to work on the order complex. Moreover, Section \ref{sec:DM-hypergraph-complex-like} reveals that we should concentrate on a hypergraph which looks like a simplicial complex if we want to establish Theorem  \ref{Mainthm:discrete-Morse} on hypergraphs.
 
\subsection{Topologies on hypergraphs} \label{sec:topology-hypergraph}


There are several natural approaches available to define a topology on a  hypergraph. 

The concept of finite topology is  introduced and studied by Alexandrov
\cite{Alexandrov}, and later by Stong from the  perspective of  algebraic topology  \cite{Stong66}.  

There are several ways to endow a finite hypergraph $(V,\E)$  with a topology
 on its hyperedge set $\E$.

\begin{defn}\label{def:down-topo}
The (down) finite topology $\T$ on $\E$ is generated by the base $\{U_e\}_{e\in\E}$, where $U_e=\{e'\in\E:e'\subset e\}$. 
\end{defn} 

\begin{defn}
The (up) finite topology $\T'$ on $\E$ is generated by the base $\{U_e^{up}\}_{e\in\E}$, where $U_e^{up}=\{e'\in\E:e'\supset e\}$. 
\end{defn}

 \begin{defn}
 The \textbf{order complex} of $\E$ denoted by
$$S_\E:=\{\C\subset\E: \C\text{ is a chain}\}$$
collects all inclusion chains in $\E$.
It is clear that $S_\E$ is a simplicial complex with the vertex set
$\E$. Define the  geometric realization of $S_\E$ by
$$|S_\E|= \bigcup_{\C\in S_\E}\conv(\vec 1_{e}:e\in \C).$$
\end{defn}

associated simplicial complex \cite{PL91}. 
 \begin{defn}\label{def:associated-complex}
The {\sl associated simplicial complex} $(V,\K_{\E})$  
is the smallest simplicial complex $\K_\E$ containing $\E$. 
Each hyperedge $e$ corresponds to an open simplex $|e|$ in the geometric realization $|\K_{\E}|$. 
The geometric realization $|\E|$ is then  defined as  $\bigcup\limits_{e\in \E}|e|$ in the geometric simplicial complex $|\K_{\E}|$.  
\end{defn}
As a subset of $|\K_{\E}|$, the geometric realization (or underlying space)
$|\E|$ may be neither closed nor open (see Examples \ref{ex:1} and
\ref{ex:2}). In contrast, the order complex $|S_\E|$ is closed.

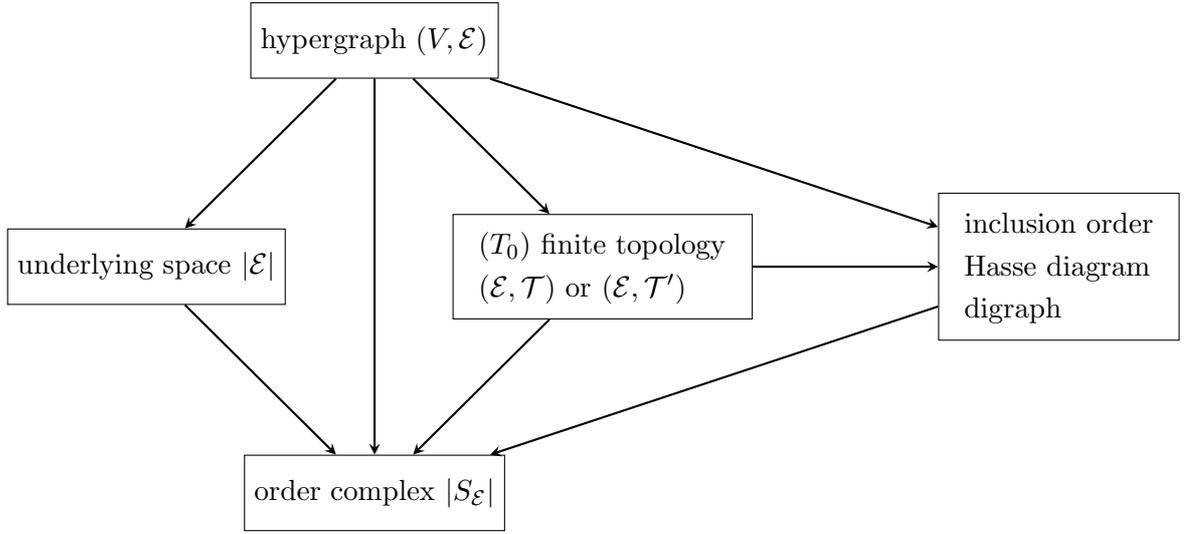
\begin{figure}[H]
\begin{tikzpicture}[node distance=6cm]
\node (h) [process] {hypergraph $(V,\E)$};
\node (g) [process, below of=h, xshift=-3cm, yshift=3cm] {underlying space $|\E|$};
\node (p) [process, below of=h, xshift=9cm, yshift=3cm] {
\begin{tabular}{ll}
inclusion order \\ Hasse diagram \\ digraph 
\end{tabular}};
\node (f) [process, below of=h, xshift=3cm, yshift=3cm] {\begin{tabular}{ll}
($T_0$) finite topology \\ $(\E,\T)$ or $(\E,\T')$
\end{tabular}};
\node (o) [process, below of=h] {order complex $|S_\E|$};
\draw [arrow](h) --node[anchor=south]{\small  }  (g);
\draw [arrow](h) --node[anchor=south]{\small  }  (p);
\draw [arrow](h) --node[anchor=south]{\small  }  (f);
\draw [arrow](h) --node[anchor=south]{\small  }  (o);
\draw [arrow](f) --node[anchor=south]{\small  }  (o);
\draw [arrow](g) --node[anchor=south]{\small  }  (o);
\draw [arrow](p) --node[anchor=south]{\small  }  (o);
\draw [arrow](f) --node[anchor=south]{\small  }  (p);
\end{tikzpicture}
\caption{\label{Fig:topologoes-on-E} The topologies on  hypergraphs in this paper. }
\end{figure}
In this section,  
we provide some detailed descriptions on the relations among these objects. 

\begin{defn}(Section 1.4 in \cite{Barmak11})
A weak
homotopy equivalence between two topological spaces $X$ and $Y$ is a continuous map $X\to Y$ (or $Y\to X$) which induces isomorphisms in all homotopy groups.

Two topological spaces $X$ and $Y$ are {\sl weakly homotopy equivalent} (denoted by $\mathop{\simeq}\limits^{weak}$) if there exists a finite sequence of topological spaces $X=X_0,X_1,\cdots,X_n=Y$ such that there are weak
homotopy equivalences $X_i\to X_{i+1}$ (or $ X_{i+1}\to X_i$) for every $0\le i\le n-1$. 
\end{defn}
It is interesting that the above four topologies are pairwise distinct, but they are weakly homotopy equivalent. In detail, we have:

\begin{pro}\label{pro:topology-on-hyperedges}
The topologies as stated above  are (weakly) homotopy equivalent, i.e.,
 $$(\E,\T)\mathop{\simeq}\limits^{weak}(\E,\T')\mathop{\simeq}\limits^{weak}|\E|\simeq|S_{\E}|.$$
\end{pro}
\begin{proof}
The proof of $(\E,\T)\mathop{\simeq}\limits^{weak}|S_{\E}|$ is essentially established in  \cite{McCord66} for finite topologies (see also \cite{Barmak11}). 
Consider a new hypergraph $(V,\E')$ defined by $\E'=\{ e':\,
e\in\E\}$, where $e'=V\setminus e$ is the complement of $e$ in $V$.  Then $(\E,\T')\mathop{\simeq}\limits^{weak}|S_{\E'}|$. Since the order complex associated with an order is homeomorphic to the order complex associated with its inverse order (see \cite{Barmak11}), we have $|S_{\E'}|=|S_{\E}|$.

To prove $|\E|\simeq|S_{\E}|$, we only need to show that $|S_{\E}|$ is a strong deformation retract 
of $|\E|$.

Let $\K$ be the smallest simplicial complex containing $\E$, and note the following facts:
\begin{enumerate}
\item $|\E|=|\K|\setminus \bigsqcup_{e\in \K\setminus\E} |e|=\bigcap_{e\in \K\setminus\E}(|\K|\setminus|e|) $ and $|S_\E|=|\K|\setminus \bigcup_{e\in \K\setminus\E}|\mathrm{star}_{S_\K}(e)|=\bigcap_{e\in \K\setminus\E}(|\K|\setminus|\mathrm{star}_{S_\K}(e)|)$, where $|\mathrm{star}_{S_\K}(e)|$ is the relatively open geometric realization of the star neighborhood of $e$ in $|S_\K|$. 
\item $|\mathrm{star}_{S_\E}(e)|
\simeq|e|\simeq \mathrm{pt}$ for any $e\in\E$; $|\mathrm{star}_{S_\K}(e)|\simeq|\mathrm{star}_{\K}(e)|\simeq|e|\simeq \mathrm{pt}$  for any $e\in\K$.
\item There is a deformation $\eta_e$ from $|\mathrm{star}_{S_\K}(e)|\setminus|e|$ to $ |\mathrm{link}_{S_\K}(e)|$, for any $e$. 
\item There is a natural  deformation $\eta_e':(|\K|\setminus|e|)\times[0,1]\to |\K|\setminus|e|$ with $\eta_e'(|\K|\setminus|e|,1)=|\K|\setminus |\mathrm{star}_{S_\K}(e)|$, for any $e$. 
\item Let $\K\setminus\E=\{e^1,\cdots,e^k\}$. Then the composition of the deformations $\eta_{e^1}'$, $\cdots$, $\eta_{e^k}'$,  gives a deformation from $|\E|$ to $|S_{\E}|$,  according to 
\begin{align*}
\eta'_{e^1}(\eta'_{e^2}(\cdots(\eta'_{e^k}(|\E|,1),\cdots),1),1)&=\eta'_{e^1}(\eta'_{e^2}(\cdots(\eta'_{e^k}(\bigcap_{i=1}^k(|\K|\setminus|e^i|),1),\cdots),1),1)
\\&=\bigcap_{i=1}^k(|\K|\setminus \mathrm{star}_{S_\K}(e^i))=|S_\E|.
\end{align*}
\end{enumerate}
The proof is completed. 
\end{proof}

\begin{example}\label{ex:1}
Let $V=\{1,2,3,4\}$ and $\E=\{\{1\},\{2\},\{3\},\{4\},\{1,2,3\},\{1,3,4\}\}$. We show the geometric realization $|\E|$ and the order complex $|S_\E|$ in the left picture in Remark \ref{remark:Nerve} below.
\end{example}
\begin{example}\label{ex:2}
Let $V=\{1,2,3,4\}$ and $\E=\{\{1\},\{2\},\{4\},\{1,2,3\},\{1,3,4\}\}$.  The geometric realization $|\E|$ and the order complex $|S_\E|$ are displayed  on the right  picture in Remark \ref{remark:Nerve}.
\end{example}

\begin{remark}\label{remark:Nerve}
The conclusion $|\E|\simeq|S_{\E}|$ in Proposition \ref{pro:topology-on-hyperedges} can be regarded as a special Nerve Theorem for  \v{C}ech-type  complexes. Indeed, we can see $|S_{\E}|$ as the nerve of $|\E|$ in  a certain sense. To visually understand this point, we draw the nerve complex of a special cover of the underlying space $|\E|$ for Examples \ref{ex:1} and \ref{ex:2}, respectively:
\begin{center}
\begin{tikzpicture}[scale=1.5]
\draw[dashed] (0,0)--(0,2);
\draw[dashed] (1.732,1)--(0,2);
\draw[dashed] (1.732,1)--(0,0);
\draw[dashed] (-1.732,1)--(0,2);
\draw[dashed] (-1.732,1)--(0,0);
\draw[blue] (0,2)--(0.577,1)--(0,0)--(-0.577,1)--(0,2);
\draw[blue] (0.577,1)--(1.732,1);
\draw[blue] (-0.577,1)--(-1.732,1);
\filldraw[black,fill opacity=0.15,line width=0,draw =red!0]  (0,0)--(0,2)--(1.732,1);
\filldraw[black,fill opacity=0.12,line width=0,draw =red!0]  (0,0)--(0,2)--(-1.732,1);
\node (1) at  (0,-0.2) {$1$};\node (1) at  (0,0) {$\bullet$};
\node (2) at  (1.782,1.2) {$2$};\node (2) at  (1.732,1) {$\bullet$};
\node (3) at  (0,2.2) {$3$};\node (3) at  (0,2) {$\bullet$};
\node (4) at  (-1.782,1.2) {$4$};\node (4) at  (-1.732,1) {$\bullet$};
\node (123) at  (0.6,1.2) {$123$};\node (123) at  (0.577,1) {{\color{blue}$\bullet$}};
\node (134) at  (-0.6,1.2) {$134$};\node (134) at  (-0.577,1) {{\color{blue}$\bullet$}};
\end{tikzpicture}\;\;\;\;\;\;
\begin{tikzpicture}[scale=1.5]
\draw[dashed] (0,0)--(0,2);
\draw[dashed] (1.732,1)--(0,2);
\draw[dashed] (1.732,1)--(0,0);
\draw[dashed] (-1.732,1)--(0,2);
\draw[dashed] (-1.732,1)--(0,0);
\draw[blue] (0.577,1)--(0,0)--(-0.577,1);
\draw[blue] (0.577,1)--(1.732,1);
\draw[blue] (-0.577,1)--(-1.732,1);
\filldraw[black,fill opacity=0.15,line width=0,draw =red!0]  (0,0)--(0,2)--(1.732,1);
\filldraw[black,fill opacity=0.12,line width=0,draw =red!0]  (0,0)--(0,2)--(-1.732,1);
\node (1) at  (0,-0.2) {$1$};\node (1) at  (0,0) {$\bullet$};
\node (2) at  (1.782,1.2) {$2$};\node (2) at  (1.732,1) {$\bullet$};
\node (4) at  (-1.782,1.2) {$4$};\node (4) at  (-1.732,1) {$\bullet$};
\node (123) at  (0.6,1.2) {$123$};\node (123) at  (0.577,1) {{\color{blue}$\bullet$}};
\node (134) at  (-0.6,1.2) {$134$};\node (134) at  (-0.577,1) {{\color{blue}$\bullet$}};
\end{tikzpicture}
\end{center}
As in the case of simplicial complexes, the vertices of $S_\E$
  correspond to the barycenters of the hyperedges in $\E$. And the deformation process from  $|\E|$ to $|S_{\E}|$ can be  visualized as the following picture (for Examples \ref{ex:1} and \ref{ex:2}, respectively):
  
  \begin{center}
\begin{tikzpicture}[scale=1.5]
\draw[dashed] (1.732,1) arc(270:210:2);
\draw[dashed] (1.732,1) arc(90:150:2);
\draw[dashed] (-1.732,1)arc(270:330:2);
\draw[dashed] (-1.732,1)arc(90:30:2);
\draw[dashed] (0,2) arc(150:210:2);
\draw[dashed] (0,2) arc(30:-30:2);
\draw[blue] (0,2)--(0.577,1)--(0,0)--(-0.577,1)--(0,2);
\draw[blue] (0.577,1)--(1.732,1);
\draw[blue] (-0.577,1)--(-1.732,1);
\filldraw[black,fill opacity=0.09,line width=0,draw =red!0]  (0,2) arc(30:-30:2) (0,0) -- (0.577,1) --(0,2);
\filldraw[black,fill opacity=0.09,line width=0,draw =red!0] (0,0) arc(150:90:2) (1.732,1) -- (0.577,1) --(0,0); \filldraw[black,fill opacity=0.09,line width=0,draw =red!0] (1.732,1) arc(270:210:2) (0,2) -- (0.577,1) --(1.732,1);
\filldraw[black,fill opacity=0.06,line width=0,draw =red!0]  (0,2) arc(150:210:2) (0,0) -- (-0.577,1) --(0,2);
\filldraw[black,fill opacity=0.06,line width=0,draw =red!0] (0,0) arc(30:90:2) (-1.732,1) -- (-0.577,1) --(0,0); \filldraw[black,fill opacity=0.06,line width=0,draw =red!0] (-1.732,1) arc(270:330:2) (0,2) -- (-0.577,1) --(-1.732,1);
\node (1) at  (0,-0.2) {$1$};\node (1) at  (0,0) {$\bullet$};
\node (2) at  (1.782,1.2) {$2$};\node (2) at  (1.732,1) {$\bullet$};
\node (3) at  (0,2.2) {$3$};\node (3) at  (0,2) {$\bullet$};
\node (4) at  (-1.782,1.2) {$4$};\node (4) at  (-1.732,1) {$\bullet$};
\node (123) at  (0.6,1.2) { $123$};\node (123) at  (0.577,1) {{\color{blue}$\bullet$}};
\node (134) at  (-0.6,1.2) {$134$};\node (134) at  (-0.577,1) {{\color{blue}$\bullet$}};
\end{tikzpicture}\;\;\;\;\;\;\begin{tikzpicture}[scale=1.5]
\draw[dotted] (0,0) to[out=90,in=250] (0.288,1.5) to[out=0,in=150] (1.732,1);
\draw[dashed] (1.732,1)to[out=180,in=60](0,0);
\draw[dotted] (0,0) to[out=90,in=290] (-0.288,1.5) to[out=0,in=30] (-1.732,1);
\draw[dashed] (-1.732,1) to[out=0,in=120](0,0);
\draw[blue] (0.577,1)--(0,0)--(-0.577,1);
\draw[blue] (0.577,1)--(1.732,1);
\draw[blue] (-0.577,1)--(-1.732,1);
\filldraw[black,fill opacity=0.09,line width=0,draw =red!0]  (0,0) to[out=90,in=250] (0.288,1.5) to[out=0,in=150] (1.732,1) to[out=180,in=60](0,0);
\filldraw[black,fill opacity=0.06,line width=0,draw =red!0]  (0,0) to[out=90,in=290] (-0.288,1.5) to[out=180,in=30] (-1.732,1) to[out=0,in=120](0,0);
\node (1) at  (0,-0.2) {$1$};\node (1) at  (0,0) {$\bullet$};
\node (2) at  (1.782,1.2) {$2$};\node (2) at  (1.732,1) {$\bullet$};
\node (4) at  (-1.782,1.2) {$4$};\node (4) at  (-1.732,1) {$\bullet$};
\node (123) at  (0.6,1.2) {$123$};\node (123) at  (0.577,1) {{\color{blue}$\bullet$}};
\node (134) at  (-0.6,1.2) {$134$};\node (134) at  (-0.577,1) {{\color{blue}$\bullet$}};
\end{tikzpicture}
  \end{center}
\end{remark}

Proposition \ref{pro:topology-on-hyperedges} allows us to define the homology of a hypergraph by 
 $$H(\E):=H(|\E|)\cong H(\E_\T) \cong H(\E_{\T'})\cong H(|S_\E|)$$
 where $\E_\T$ (resp. $\E_{\T'}$) indicates the hypergraph endowed with the down (resp. up) finite topology (see Definition  \ref{def:down-topo}). In fact, since  
 McCord's map 
$|S_\E|\ni \sum_{e\in\C} t_e\vec 1_e\mapsto \bigcup\limits_{e\in\C} e\in\E$ is
a weak homotopy equivalence \cite{McCord66}, it induces an isomorphism
$H(|S_\E|)\to H(\E_\T)$. And the deformation from $|\E|$ to $|S_\E|$ defined in the proof of Proposition \ref{pro:topology-on-hyperedges} induces an isomorphism $H(|\E|)\to H(|S_\E|) $.

Furthermore, by  the excision theorem and other  basic results of
  homology theory, we have
\begin{pro}[Homology properties]\label{pro:relative-homology}
For $\A\subset\B\subset \E$ and $\A'\subset\B'\subset\E$ with $\B\setminus \A=\B'\setminus \A'$, we can regard $(V,\A)$, $(V,\B)$, $(V,\A')$ and $(V,\B')$ as the sub-hypergraphs of $(V,\E)$. Then we have
\begin{align*}
H(\B,\A):&=H(|\B|,|\A|)\cong H(\B_\T,\A_\T)\cong H(\B_{\T'},\A_{\T'})\cong H(|S_\B|,|S_\A|)
\\& \cong H(|S_{\B'}|,|S_{\A'}|)\cong H(\B'_{\T'},\A'_{\T'})\cong H(\B'_{\T},\A'_{\T})\cong H(|\B'|,|\A'|):= H(\B',\A').
\end{align*}
\end{pro}


Propositions \ref{pro:topology-on-hyperedges} and \ref{pro:relative-homology}
suggest to consider the order complex instead of other complexes like
the \v{C}ech  and the Vietoris-Rips complex with respect to $\E$, which do not include the precise topological data of $\E$ (see Remarks \ref{remark:cech} and \ref{remark:VR} for some comments).

\begin{remark}\label{remark:cech}
The \textbf{\v{C}ech complex} for $\E$ is the simplicial complex with  vertex set $\E$ and face set $\mathcal{N}_\E:=\{E'\subset \E:\bigcap_{e\in E'}e\ne\varnothing\}$.

However, 
it can be verified that $|\mathcal{N}_\E|\simeq |\K_\E|$. If $(V,\E)$ is a
simplicial complex, then $|\mathcal{N}_\E|\simeq |\K_\E|= |\E|\simeq
|S_\E|$. But if $(V,\E)$ is not a simplicial complex, $|\mathcal{N}_\E|$ may
not be homotopic to $|\E|\simeq |S_\E|$. Due to this reason, we shall work on
the order complex instead of  the  C\v{e}ch complex for a hypergraph.
\end{remark}

\begin{remark}\label{remark:VR}
The \textbf{Vietoris-Rips complex} for $\E$ is the simpicial complex with  vertex set $\E$ and face set $\mathcal{VR}_\E:=\{E'\subset \E:e'\cap e\ne\varnothing,\forall e',e\in E'\}$. Also, $|\mathcal{VR}_\E|\not \simeq |\E|$ in general.

According to the inclusion relation $S_\E\subset \mathcal{N}_\E\subset
\mathcal{VR}_\E$ and Remark \ref{remark:cech}, it is better to work with the order complex $|S_\E|$.
\end{remark}

\begin{remark}
The embedded homology of a hypergraph $\E$ introduced in \cite{BLRW19} is different from $H(\E)$. As an  embedded homology may not be the homology of any simplicial
complex \cite{GWXW21}, we would like to work on the geometric realization $|\E|$ and the order complex $|S_\E|$, which are more geometrically intuitive.

\end{remark}
\subsection{Discrete Morse function on  special hypergraphs}
\label{sec:DM-hypergraph-complex-like}

To establish a More theory on hypergraphs, we can first work on a complex-like hypergraph, whose combinatorial structure and topological structure  are   similar to a simplicial complex. The next definition for  Morse functions on  hypergraphs follows  verbatim from the original definition of  Morse functions on  simplicial complexes by Forman.

\begin{defn}
An edge pair $(e', e)$ is called {\sl sequential} if $e'\subsetneqq e$ and there is no other $e''$ with $e'\subsetneqq e''\subsetneqq e$. A function  $f:\E\to \R$ is a {\sl simple discrete Morse function} if it has the property that for any $e\in\E$, $\# \{\text{sequential pair }(e', e): f(e')\ge f(e)\}\le 1$ and $\#\{\text{sequential pair }(e,\tilde{e}): f(e)\ge f(\tilde{e})\}\le 1$. An edge $e$ is called a critical point of a simple discrete Morse function $f$ if $\{\text{sequential pair }(e', e): f(e')\ge f(e)\}=\varnothing=\{\text{sequential pair }(e,\tilde{e}): f(e)\ge f(\tilde{e})\}$. We say that $e$ has height $k$ if there are at most $k$ edges, $e^1,\cdots,e^k$, in a chain of the form  $e^1\subsetneqq e^2\subsetneqq\cdots\subsetneqq e^k\subsetneqq e$. A critical point $e$ of $f$ has {\sl index} $k$ if the height of $e$ is $k$.
\end{defn}
We have a preliminary result for special hypergraphs and the corresponding typical functions, which is a  slight 
generalization of Forman's discrete Morse theory.
\begin{theorem}\label{thm:complex-like-hypergraph}
For a finite hypergraph $(V,\E)$, assume that $\E$ has the properties that the geometric realization $|\{e'\in\E:e'\subsetneqq e \}|$ is homotopic to a sphere for any $e$, and the geometric realization $|\{e''\in\E:e''\subset e,\,e''\not\in\{e',e\} \}|$ is contractible for any sequential edge pair $(e',e)$. Let $f:\E\to \R$ be a simple discrete Morse function with  exactly one  critical point of index $k$. Then the geometric realization $|\E|$ is homotopy equivalent to a CW-complex with one $k$-cell.
\end{theorem}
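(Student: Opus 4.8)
The plan is to mirror the simplicial-complex argument of Subsection \ref{sec:discrete-Morse}, transferring everything to the Lov\'asz extension $f^L$ on the order-complex realization $|S_\E|$ and then invoking the continuous Morse fundamental theorem from the start of Section \ref{sec:critical}. The two structural hypotheses on $\E$ are precisely the substitutes for the combinatorial facts that came for free in the simplicial case: the sphere hypothesis replaces the fact that the proper-face poset of a simplex has a sphere as its order complex (which drove Lemma \ref{lemma:link-homotopy}), while the contractibility hypothesis on $|\{e''\subset e: e''\notin\{e',e\}\}|$ replaces the collapsibility that makes a regular pair cancellable. I would also use the weak equivalence $|\E|\mathop{\simeq}\limits^{weak}|S_\E|$ recorded in the Fact so that the final homotopy statement about $|\E|$ follows from the corresponding statement about $|S_\E|$.

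First I would prove the value-monotonicity analog of Lemma \ref{lemma:Morse-sigma}. If $e$ is a critical edge, the simple discrete Morse condition forbids any ascending sequential sub-edge; an induction on height, identical in structure to the proof of Lemma \ref{lemma:Morse-sigma}, then gives $f(e'')<f(e)$ for every $e''\subsetneqq e$ and $f(\tilde e)>f(e)$ for every $\tilde e\supsetneqq e$. For a sequential pair $(e',e)$ I would record the corresponding three local monotonicity statements on nearby edges, again exactly as in Lemma \ref{lemma:Morse-sigma}. These are the inputs that let one read off the homotopy type of descending stars and links from $f^L$.

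Next come the descending-link computations, the heart of the argument. Working in $|S_\E|$, the descending link of $\vec 1_e$ decomposes as a geometric join of a ``below'' part, namely the order complex of $\{e'\in\E: e'\subsetneqq e\}$, and an ``above'' part built from $\{\tau\in\E:\tau\supsetneqq e\}$, just as in Lemma \ref{lemma:link-homotopy}. For a critical edge $e$ of height $k$, the monotonicity step shows the entire below-part lies in the sublevel set while the above-part does not meet $\mathrm{link}_-$, so $\mathrm{link}_-(\vec 1_e)$ is the order-complex realization of $\{e'\subsetneqq e\}$; by hypothesis (1), together with $|\,\cdot\,|\mathop{\simeq}\limits^{weak}|S_{\,\cdot\,}|$, this is homotopy equivalent to $\mathbb{S}^{k-1}$, so $\mathrm{star}_-(\vec 1_e)$ is the cone $\conv$ over it, i.e.\ a $k$-cell. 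This is the hypergraph version of Lemma \ref{lemma:sub-level-critial}: crossing the value $f(e)$ attaches exactly one $k$-cell. For a sequential pair $(e',e)$, hypothesis (2) forces the relevant descending structure to be contractible, so $\vec1_{e'}$ and $\vec1_e$ are regular points of $f^L|_{|S_\E|}$ in the weak-slope/topological sense (the analog of Lemma \ref{lemma:weak-slope-metric}), and crossing their values is a deformation retract. I would make the collapse precise by applying Lemma \ref{lemma:Kuhnel} to the PL function $f^L$ on $|S_\E|$, so that the homotopy changes are read off the induced subcomplexes on sublevel sets. Assembling these local statements through the Morse fundamental theorem then yields that $|S_\E|$, and hence $|\E|$, is homotopy equivalent to a CW-complex in which the critical edge of index $k$ contributes one $k$-cell.

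The main obstacle is exactly this descending-link step. In the simplicial setting the sphere and the contractibility were automatic from the Boolean structure of faces, whereas here one must extract them entirely from hypotheses (1) and (2), and moreover do so through an equivalence between a hypergraph's geometric realization and its order complex that is only \emph{weak}. Two points need care: showing that the ``above'' part of the link genuinely drops out of $\mathrm{link}_-$ for a critical edge (this relies on the monotonicity lemma rather than on any lattice structure), and upgrading the contractibility in (2) from a homotopy statement to an actual elementary collapse of the sublevel filtration, which is where the piecewise-linearity of $f^L$ and Lemma \ref{lemma:Kuhnel} carry the weight.
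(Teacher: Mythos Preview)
The paper does not actually prove Theorem \ref{thm:complex-like-hypergraph}; it is stated as ``a straightforward generalization of Forman's discrete Morse theory'' and left at that, with details deferred to \cite{JZ-prepare20}. So there is no paper-proof to compare against in detail. That said, the phrase ``generalization of Forman'' points to a direct sublevel-filtration argument on the poset $\E$ (attach a cell at each critical edge, collapse across each sequential regular pair), whereas you route everything through the Lov\'asz extension $f^L$ on $|S_\E|$ and continuous Morse theory. Your route is the one the paper uses for the simplicial theorems (Theorems \ref{thm:critical-discrete-Morse}--\ref{thm:LS-category-discrete-Morse-Lovasz}), so it is natural here and would in fact feed directly into Theorem \ref{Mainthm:discrete-Morse-hyper}; the Forman-style route is shorter if one only wants the CW statement.

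One genuine gap in your plan: the monotonicity step. You say the induction is ``identical in structure to the proof of Lemma \ref{lemma:Morse-sigma}'', but that proof uses a purely simplicial fact, namely that any $\nu^{p-2}\subset\sigma^p$ has \emph{two} covers $\nu^{p-1}$ inside $\sigma$, so a single violation forces $\#U(\nu^{p-2})\ge 2$. In a general hypergraph there is no such diamond property: an edge $e''$ two heights below $e$ may have a unique sequential cover inside $\{e'\subset e\}$, and then the contradiction does not fire. The sphere hypothesis on $|\{e'\subsetneqq e\}|$ is only a homotopy condition and does not by itself give you two covers at every rank. You need to either (i) extract a combinatorial ``thinness''/pseudomanifold property from the hypotheses (and say how), or (ii) bypass full monotonicity and argue directly that for a critical $e$ the entire below-poset lands in $\mathrm{link}_-(\vec 1_e)$ and the above-part does not, which again requires more than just the sequential conditions in the definition of ``critical''. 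Until this is nailed down, your identification $\mathrm{link}_-(\vec1_e)\simeq\mathbb{S}^{k-1}$ is not justified. The same issue recurs for the regular pair: hypothesis (2) gives contractibility of a specific sub-poset, but to conclude $\mathrm{link}_-$ is contractible you must first know which edges actually lie below the value $f(e')$, and that again leans on a monotonicity statement you have not yet secured in the hypergraph setting.
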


\begin{defn}[complex-like hypergraph]
A finite hypergraph satisfying the conditions in Theorem \ref{thm:complex-like-hypergraph} is called a {\sl complex-like hypergraph.}
\end{defn}

It is clear that every simplicial complex is a complex-like hypergraph. 
Example \ref{ex:2} shows a complex-like hypergraph which is not a simplicial complex, while Example \ref{ex:1} shows a hypergraph which is not complex-like. One may find that the hypergraph in Example \ref{ex:2} and the simplicial complex $\K=\{\{1\},\{2\},\{3\},\{1,2\},\{2,3\}\}$ have the same order complex. However, there exists a complex-like  hypergraph whose order complex doesn't agree with any order complex on simplicial complexes (see Example \ref{exam:complex-like-not-simplicial}). So, the phrase `complex-like' essentially refers to `like a simplicial complex in topology'.

The complex-like condition  is technical but natural,  ensuring the validity  of  Lemmas \ref{lemma:link-homotopy}, \ref{lemma:sub-level-critial} and \ref{lemma:weak-slope-metric}. In the sequel, all the results regarding Morse theory on a hypergraph require the  complex-like condition.  Actually, Example \ref{exam:general-hypergral-fail-Morse} indicates that the main theorems in this paper do not hold for general hypergraphs.

We then define  the  Lov\'asz extension restricted on  $|S_{\E}|$ 
as follows. 

According to Proposition \ref{pro:setfamily-order-complex}, the Lov\'asz extension $f^L$ is well-defined on $|S_{\E}|$ for any $f:\E\to\R$. Indeed, the feasible domain of  $f^L$ is 
$$\D_\E=
\begin{cases}
\bigcup\limits_{t\ge0}t|S_\E|\;\;\;\;\;\;\;\;\;\;\;=\{t\vec x:t\ge0,\vec x\in|S_\E|\}\subset \R_{\ge0}^V,&\text{ if }V\not\in \E,\\
\bigcup\limits_{t\ge0}t|S_\E|+\R\vec1_V=\{t\vec x:t\ge0,\vec x\in|S_\E|\}+\mathrm{span}(\vec 1_V),&\text{ if }V\in \E.
\end{cases}
$$
In any case,  $f^L$ is well-defined on the polyhedral cone $\bigcup\limits_{t\ge0}t|S_\E|$. We shall restrict the Lov\'asz extension  $f^L$   
on $|S_{\E}|$.

Let $$\mathsf{LSC}_m(\E)=\{E'\subset\E:\;\mathrm{cat}(|S_\E(E')|)\ge m\}$$
where $S_\E(E')$ is the induced subcomplex of $S_\E$ on $E'$.  
Then, Theorems \ref{thm:critical-discrete-Morse}, \ref{thm:Morse-vector-f} and \ref{thm:LS-category-discrete-Morse-Lovasz} can also be generalized to this setting of hypergraphs:
\begin{theorem}
	\label{Mainthm:discrete-Morse-hyper}
	For a hypergraph $(V,\E)$ under the assumptions of Theorem \ref{thm:complex-like-hypergraph}, let $f:\E\to\R$ be an injective discrete Morse function. Then the following conditions are equivalent:
	\begin{enumerate}[(1)]
		\item $e$ is a critical point of $f$ with index $i$;
		\item $\vec1_e$ is a critical point of $f^L|_{|S_\E|}$ with index $i$ in the sense of weak slope (metric Morse theory);
		\item  $\vec1_e$ is a critical point of $f^L|_{|S_\E|}$ with index $i$ in the sense of K\"uhnel (PL Morse theory);
		\item  $\vec1_e$ is a Morse critical point of $f^L|_{|S_\E|}$ with index $i$ in the sense of topological Morse theory.
	\end{enumerate}

	Moreover, the discrete Morse vector $(n_0,n_1,\cdots,n_d)$,  representing the number $n_i$ of critical points with index $i$, of $f$ coincides with the  continuous Morse vector of $f^L|_{|S_\E|}$.
	
	Moreover, the Lusternik-Schnirelmann category theorem is preserved  under Lov\'asz extension:
	$$
	\min\limits_{E'\in\mathsf{LSC}_m(\E)}\max\limits_{e\in E'} f(e)=\inf\limits_{S\in \mathsf{LSC}_m(|S_\E|)}\sup\limits_{\vec x\in S} f^L(\vec x),
	$$
\end{theorem}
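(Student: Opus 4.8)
The plan is to run the three arguments already given in the simplicial case, namely the proofs of Theorems \ref{thm:critical-discrete-Morse}, \ref{thm:Morse-vector-f} and \ref{thm:LS-category-discrete-Morse-Lovasz}, after replacing the simplicial complex $\K$ everywhere by the order complex $S_\E$. The point that makes this transfer possible is that $S_\E$ is itself a genuine simplicial complex with vertex set $\E$, so that $f^L$ is well-defined, piecewise linear and Lipschitz on $|S_\E|$ exactly as before, and Lemma \ref{lemma:Kuhnel} applies verbatim to $f^L|_{|S_\E|}$. Thus the whole theorem reduces to re-establishing the local homotopy computations of Lemmas \ref{lemma:Morse-sigma}, \ref{lemma:link-homotopy}, \ref{lemma:sub-level-critial} and \ref{lemma:weak-slope-metric} in the hypergraph setting; once these are in hand, the equivalence of (1)--(4), the Morse-vector identity and the category identity follow with no further ideas.

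First I would record the combinatorial analog of Lemma \ref{lemma:Morse-sigma}, now phrased with sequential pairs in place of codimension-one faces: for an injective simple discrete Morse function a critical edge $e$ satisfies $f(e')<f(e)<f(\tilde e)$ for all $e'\subsetneqq e\subsetneqq \tilde e$ in $\E$, and for a regular pair $(e',e)$ (a sequential pair with $f(e')>f(e)$) every proper subedge of $e$ other than $e'$ has $f$-value below $f(e)$, while every edge strictly above $e'$ not containing $e$ has $f$-value above $f(e')$. These are proved by the same chain-induction as in Lemma \ref{lemma:Morse-sigma}.

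The crux, and the step I expect to be the only genuine obstacle, is the hypergraph version of Lemma \ref{lemma:link-homotopy}, because this is the one place where a general $\E$ differs from a simplicial complex: the lower part
$$\mathbb{S}_-(\vec 1_e):=\bigcup_{\text{chain }\C\subset\{e'\in\E:\,e'\subsetneqq e\}}\conv(\vec 1_{e'}:e'\in\C)$$
is no longer automatically a sphere, since $\{e'\in\E:e'\subsetneqq e\}$ need only be a subfamily of $\power(e)$. This is exactly what the two hypotheses of Theorem \ref{thm:complex-like-hypergraph} supply. The first, that $|\{e'\in\E:e'\subsetneqq e\}|$ is a sphere, gives $\mathrm{link}_-(\vec 1_e)\simeq \mathbb{S}^{k-1}$ for a critical edge $e$ of height $k$, and also handles the lower member $e'$ of a regular pair, whose lower link is again a sphere joined with the single paired edge $e$ above it and hence contractible. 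The second, that $|\{e''\in\E:e''\subsetneqq e,\,e''\neq e'\}|$ is contractible for a sequential pair $(e',e)$, is precisely what forces $\mathrm{link}_-(\vec 1_e)\simeq\mathrm{pt}$ for the upper member $e$: by the first paragraph's monotonicity, its lower link is exactly this family and it has no contributing edges above. Altogether one recovers
$$\mathrm{link}_-(\vec 1_e)\simeq\begin{cases}\mathbb{S}^{k-1},& e\text{ critical of index }k,\\ \mathrm{pt},& e\text{ regular},\end{cases}$$
which is the conclusion of Lemma \ref{lemma:link-homotopy}; the sublevel-set description of Lemma \ref{lemma:sub-level-critial} and the weak-slope flow construction of Lemma \ref{lemma:weak-slope-metric} then transfer without change.

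Granting these, the equivalence of (1)--(4) follows exactly as in the proof of Theorem \ref{thm:critical-discrete-Morse}, the common homotopy types of $\mathrm{link}_-(\vec 1_e)$ and $\mathrm{star}_-(\vec 1_e)$ linking the metric, topological and K\"uhnel (PL) characterizations. For the Morse-vector identity I would argue as in Theorem \ref{thm:Morse-vector-f}: $f^L|_{|S_\E|}$ is (by construction) the PL extension of $f$ viewed as a vertex function on $S_\E$, so Lemma \ref{lemma:Kuhnel} identifies each sublevel set $\{f^L|_{|S_\E|}\le t\}$ up to homotopy with the induced subcomplex of $S_\E$ on $\{e\in\E:f(e)\le t\}$, whence the index-by-index cell counts agree. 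Finally the Lusternik--Schnirelmann identity is obtained as in Theorem \ref{thm:LS-category-discrete-Morse-Lovasz}: one pushes any $S\in\mathrm{Cat}_m(|S_\E|)$ down to a sublevel set without increasing $\sup_S f^L$, applies the claim $\mathrm{cat}(\{f^L\le a\})=\mathrm{cat}(S_\E|_{\{e:f(e)\le a\}})$ (again a consequence of Lemma \ref{lemma:Kuhnel} for the simplicial complex $S_\E$), and reads off
$$\inf_{S\in\mathrm{Cat}_m(|S_\E|)}\sup_{\vec x\in S}f^L(\vec x)=\min_{E'\in\mathrm{Cat}_m(\E)}\max_{e\in E'}f(e).$$
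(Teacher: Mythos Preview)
Your proposal is correct and follows exactly the route the paper indicates: the paper does not give a separate proof of this theorem but simply asserts that Theorems \ref{thm:critical-discrete-Morse}, \ref{thm:Morse-vector-f} and \ref{thm:LS-category-discrete-Morse-Lovasz} generalize to the hypergraph setting, deferring details to \cite{JZ-prepare20}. You have in fact supplied more than the paper does, by identifying precisely where the two hypotheses of Theorem \ref{thm:complex-like-hypergraph} enter (namely, in replacing the automatic sphere/ball structure of $\mathrm{link}_-$ and $\mathrm{star}_-$ in Lemma \ref{lemma:link-homotopy}), which is the only place the argument genuinely differs from the simplicial case.
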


The proof of Theorem \ref{Mainthm:discrete-Morse-hyper} follows verbatim  the proof of Theorem \ref{Mainthm:discrete-Morse} in Section \ref{sec:discrete-Morse}, and thus we omit it.  

\begin{example}\label{exam:complex-like-not-simplicial}
The setting of Theorem \ref{Mainthm:discrete-Morse-hyper} is a little wider than Theorem \ref{Mainthm:discrete-Morse}. For example, taking $V=\{1,2,3,4\}$ and $\E=\{\{1\},\{2\},\{1,2,3\},\{1,2,4\},\{1,2,3,4\}\}$. It is clear that $\E$ is not a simplicial complex. Moreover, its order complex $S_\E$ cannot be an order complex of any simplicial complex. However, one can check that $(V,\E)$ is complex-like, and both Theorem \ref{thm:complex-like-hypergraph} and Theorem \ref{Mainthm:discrete-Morse-hyper} are valid for $(V,\E)$.
\end{example}


Theorem \ref{Mainthm:discrete-Morse-hyper}  doesn't hold for a general hypergraph. 
In fact, we need the complex-like condition  to obtain appropriate  analogs of Lemmas \ref{lemma:link-homotopy}, \ref{lemma:sub-level-critial} and \ref{lemma:weak-slope-metric}.  
\begin{example}\label{exam:general-hypergral-fail-Morse}
Let $V=\{1,\cdots,n\}$ and $\E=\{\{1\},\{1,2\},\cdots,\{1,\cdots,k\},\cdots,\{1,\cdots,n\}\}$. Then 
the function $f:\E\to\R$ defined by  $f(\{1,\cdots,k\})=k$ is a discrete Morse function, and every $e\in\E$ is a critical point of $f$, but every $\vec 1_e$ with $\#e\not\in\{1,n\}$ 
 is a regular point of $f^L|_{|S_\E|}$. Therefore,  Theorem \ref{Mainthm:discrete-Morse-hyper}  fails for such a hypergraph.
 \end{example}
 
 It is expected that a more general Morse theory of hypergraphs and more applications will be developed in the future.  
 The key idea is that the definition of  critical points of a general function $f$ on $\E$ is  translated into the PL critical point theory of its restricted Lov\'asz extension $f^L|_{|S_\E|}$. 

\begin{defn}
\label{defn:critical-hyper}
Given a finite hypergraph $(V,\E)$ and a function $f:\E\to \R$,
we say that $e\in\E$ is a critical point of $f$ if $\vec1_e$ is a critical point of $f^L|_{|S_\E|}$ in the sense of PL Morse theory.
\end{defn}

\section{Conclusions and Discussions 
}

In this paper, we present a systematic approach on  constructing  Lov\'asz extensions of discrete Morse functions, with which we build the foundational theory to translate one Morse
theory to another. 
Our dictionary on translating different Morse
theories is useful and has the potential to solve some problems in discrete Morse theory.   We can work on the barycentric  subdivision (or the order complex) of a simplicial complex to explore the critical simplexes.  Based on our approach, we  provide  a new
definition of a discrete Lusternik-Schnirelmann category. 

We also  propose a  general definition of critical points of a function on the edge set of a hypergraph, and this allows us to 
study the Morse theory on hypergraphs by employing the PL Morse theory on polyhedral structures.  This idea works well for   certain hypergraphs, e.g., the  complex-like hypergraphs in this paper, and it is possible to  get further results 
along this direction.

We leave the following two open problems for future research:
\begin{Question}
Can we modify a discrete Morse function $f$ on a  simplicial complex to a function with fewer critical simplices by exploiting the structure of $f^L$ to perform  handle-cancellations?
\end{Question}

\begin{Question}
Can we get a concise formulation of  the discrete Morse theory on hypergraphs regarding the definition of critical points  introduced in  Definition \ref{defn:critical-hyper}?
\end{Question}

Finally, we should point out that there are many other ways to introduce topological 
structures   on hypergraphs, for example following ideas related to   
independence 
complexes \cite{Emtander09} and  Hom complexes 
\cite{Dochtermann09,Kozlov06}.   Analogously to various techniques developed for  simplicial complexes, for special  hypergraphs or posets having good patterns, one could apply methods from analysis and homotopy theory   to study these discrete structures.   To better understand these structures, further exploration of the relationship between different topological structures on families of hypergraphs is needed. The frontier of research  in topological, geometrical, and dynamical  combinatorics 
has  the potential to provide new mathematical tools in data science.

\section*{Acknowledgement}
{\small
Dong Zhang would like to thank Professor Kung-Ching Chang for his long-term guidance, encouragement and support in mathematics.
}

{ \linespread{0.95} \small 
}

\section*{Appendix}
\begin{pro}\label{pro:setfamily-order-complex}
Given a set family $\E\subset\power(V)$ on a finite set $V$,
let $$S_\E:=\{\C\subset \E:\C\text{ is a chain under the inclusion relation}\}.$$ Then $(\E,S_\E)$ forms an abstract simplicial complex with   vertex set $\E$ and face set $S_\E$.

Let $|S_\E|$ be a geometric realization determined by the simplicial map  $e\mapsto \vec 1_{e}$, $\forall e\in \E$. Then
$$\D_\E=
\begin{cases}
\{t\vec x:t\ge0,\vec x\in|S_\E|\}\subset [0,\infty)^n,&\text{ if }V\not\in \E,\\
\{t\vec x:t\ge0,\vec x\in|S_\E|\}+\mathrm{span}(\vec 1_V),&\text{ if }V\in \E.
\end{cases}
$$
\end{pro}
\begin{proof} By the definition, $(\E,S_\E)$ is a simplicial complex with the facet set consisting of maximal chains under the inclusion relation. 

We determine the feasible domain $\D_\E$ of the Lov\'asz extension of any function $f:\E\to\R$  as follows.

\begin{enumerate}
\item If $V\in \E$, then  according to the definition of $\D_\E$,  
there holds
\begin{align*}
\D_\E&=\{\vec x\in\R^n:V^t(\vec x)\in \E,\,\forall t\in (-\infty,\max\vec x)\}
\\&=\left\{\vec x\in\R^n\left|\vec x=\sum_{i\ge0} t_i\vec 1_{e_i}\text{ with }V=e_0\supset e_1\supset e_2\supset\cdots, \,t_0\in\R,\,e_i \in \E, t_i\ge0, i\ge 1 \right.\right\}
\\&=\bigcup_{\text{maximal chain }\C\subset\E}\cone(\vec 1_{e}:e\in \C)+\mathrm{span}(\vec 1_V)
\\&=\{t\vec x:t\ge0,\vec x\in|S_\E|\}+\mathrm{span}(\vec 1_V).
\end{align*}
\item If $V\not\in \E$, 
we first show that one must assume  $\min_i x_i= 0$. 
Otherwise, 
$\min_i x_i\ne 0$, then the Lov\'asz extension has a term $\min_i x_if(V)$ which needs the data of $f(V)$, but $V\not\in \E$ and thus  it is impossible to get the value of $f(V)$. 
Therefore,  $\min_i x_i$ should be set  as $0$.  Similar to the above case, for $\vec x\in\R^n$ with $\min_i x_i= 0$,  
\begin{align*}
\D_\E&=\{\vec x\in\R^n:V^t(\vec x)\in \E,\,\forall t\in [0,\max\vec x)\}
\\&=\left\{\vec x\in\R^n\left|\vec x=\sum_{i\ge1} t_i\vec 1_{e_i}\text{ with }V
\ne e_1\supset e_2\supset\cdots, \,e_i \in \E, t_i\ge0, i\ge 1 \right.\right\}
\\&=\bigcup_{\text{maximal chain }\C\subset\E}\cone(\vec 1_{e}:e\in \C)
\\&=\{t\vec x:t\ge0,\vec x\in|S_\E|\}.
\end{align*}
\end{enumerate}
The proof is completed.
\end{proof}

\begin{thebibliography}{99}\addtolength{\itemsep}{-0.5ex}


\bibitem{Alexandrov} P. Alexandrov, Diskrete Raume. Mathematiceskii Sbornik (N.S.), 2 (1937), 501--518.

\bibitem{AroneBrantner}  Gregory Z. Arone and D. Lukas B. Brantner, The action of Young subgroups on the partition complex,  Publications mathématiques de l'IHÉS (2021)

\bibitem{Bach13} F. Bach, Learning with submodular functions: A convex optimization perspective,
Found. Trends Mach. Learning, 6:145--373, 2013.

\bibitem{Banchoff67} Thomas Banchoff, Critical points and curvature for embedded polyhedra. J. Differential Geometry 1 (1967), 245--256.
  


 \bibitem{Brehm-Kuhnel87}
U. Brehm, W. K\"uhnel, Combinatorial manifolds with few vertices. Topology
26(4), 465--473 (1987)

  \bibitem{Benedetti12}   B. Benedetti, Discrete Morse theory for manifolds with boundary, Trans. Amer. Math. Soc. 364 (2012), 6631-6670.
  
  \bibitem{Benedetti16}   B. Benedetti, Smoothing discrete Morse theory, Ann. Sc. Norm. Super. Pisa, Classe di Scienze (5) 9, 2016 (2), 335--368.
  

\bibitem{B-Scott14}
Peter Bubenik, Jonathan A. Scott, Categorification of Persistent Homology, Discrete Comput Geom (2014) 51:600--627.



\bibitem{Barmak11} Jonathan A. Barmak, Algebraic Topology of Finite Topological Spaces and Applications, 2032 Lecture Notes in Mathematics, (2011).

\bibitem{Brown} Ronald Brown, Topology and Groupoids, 3rd revised, BookSurge Publishing, 2006.

\bibitem{BLRW19} S. Bressan, J. Li, S. Ren and J. Wu, The embedded homology of hypergraphs and
applications, Asian J. Math, 23  (2019), 479--500.

\bibitem{CLP03}  O. Cornea, G. Lupton, J. Oprea, and D. Tanr\'e, Lusternik-Schnirelmann category, Mathematical Surveys and Monographs, vol. 103, American Mathematical Society, Providence, RI,
2003.

\bibitem{Choquet54}    Gustave Choquet. Theory of capacities. Annales de l'institut Fourier, 5:131--295, 1954.



\bibitem{Clarke}
F. H. Clarke, Optimization and Nonsmooth Analysis, Wiley New York, 1983.



\bibitem{C-SEH07}
D.Cohen-Steiner, H.Edelsbrunner and J.Harer, Stability of persistence diagrams. Discrete
Comp. Geometry 37 (2007), 103--120.


\bibitem{D10}
M. Degiovanni, On topological and metric critical point theory, J. Fixed Point Theory Appl., {\bf 7} (2010),   85--102.

\bibitem{DM94}
M. Degiovanni and M. Marzocchi, A critical point theory for nonsmooth functionals, Ann. Mat. Pura Appl., {\bf 167} (1994), 73--100.

\bibitem{Dochtermann09} A. Dochtermann, Hom complexes and homotopy theory in the category of graphs,  European
J. Combin., 30 (2009), 490--509.

\bibitem{Edelsbrunner-Harer10}
H. Edelsbrunner, J. Harer, Computational topology: an introduction. American
Mathematical Soc. (2010)

\bibitem{Emtander09} E. Emtander,  Betti numbers of hypergraphs,  Comm. Algebra, 37 (2009), 1545--1571.

\bibitem{F05-book} Satoru Fujishige,  Submodular functions and optimization. Second edition. Annals of Discrete Mathematics, 58. Elsevier B. V., Amsterdam, 2005.

\bibitem{Forman98} R. Forman, Morse theory for cell complexes, Advances in Mathematics, 134 (1998), 90--145.

\bibitem{Forman02} R. Forman, A user's guide to Discrete Morse Theory, Sem. Lothar. Comb. 48: Art B48c (2002).


\bibitem{FMV15}   D. Fern\'andez-Ternero, E. Mac\'ias-Virg\'os, and J. A. Vilches,  Lusternik–Schnirelmann  category
of simplicial complexes and finite spaces, Topology Appl. 194 (2015), 37--50.



 \bibitem{DENV19}   D. Fern\'andez-Ternero, E. Mac\'ias-Virg\'os, N.A. Scoville, Jose Antonio Vilches, Strong Discrete Morse Theory and Simplicial L-S Category: A Discrete Version of the Lusternik-Schnirelmann Theorem, Discrete Comput Geom (2019). 



 \bibitem{Gallais10}    E. Gallais, Combinatorial realization of the Thom-Smale complex via discrete Morse theory, Ann. Sc. Norm. Super. Pisa, Classe di Scienze (5) 9, No. 2 (2010), 229--252.
 
 \bibitem{GM88}
M. Goresky and R. MacPherson. Stratified Morse Theory. Springer-Verlag, New York, 1988.

 \bibitem{GWXW21} Jelena Grbi\'c, Jie Wu, Kelin Xia, Guo-Wei Wei, A Unified Topological Approach to Data Science, arXiv:2103.16926
  



 
\bibitem{HT99} R. Horst and N. V. Thoai. DC programming: Overview. J. Optimiz. Theory
App., 103:1--43, 1999.

\bibitem{IS96} A. Ioffe and E. Schwartzman, Metric critical point theory. I. Morse regularity and homotopic stability of a minimum, J. Math. Pures Appl. 75 (1996), 125--153.

\bibitem{JostZhang2} J. Jost and D. Zhang,   Discrete-to-Continuous Extensions: Lov\'asz extension, optimizations and eigenvalue problems, arXiv:2106.03189v2
 
\bibitem{JostZhang3}  J. Jost and D. Zhang,   Discrete-to-Continuous Extensions: piecewise multilinear extension, min-max theory and spectral theory, arXiv:2106.04116v3


\bibitem{K94} Guy Katriel, Mountain pass theorems and global homeomorphism theorems, Ann. Inst. H. Poincare Anal. Non Lineaire 11 (1994),  189--209.

\bibitem{Kozlov06} D. N. Kozlov,  Simple homotopy types of Hom-complexes, neighborhood complexes, Lov\'asz 
complexes, and atom crosscut complexes,  Topology Appl., 153(2006),  2445--2454.


 \bibitem{Kuhnel90} W. K\"uhnel. Triangulations of manifolds with few vertices. In F. Tricerri,
editor, Advances in differential geometry and topology, pages 59--114. World
Scientific, Singapore, 1990.

\bibitem{LS34} L. Lusternik and L. Schnirelmann,  M\'ethodes Topologiques dans les Probl\'emes Variationnels,
Hermann, Paris, 1934.

\bibitem{Lovasz}
L. Lovasz. Submodular functions and convexity. In A.Bachem, M.Grotschel, and B.Korte,editors,
Mathematical Programming: the State of the Art, pages 235-257. Springer, 1983. 




\bibitem{M36} M. Morse, Functional topology and abstract variational theory. Proc. Nat. Acad.
Sci. USA 22 (1936), 313--319.

\bibitem{M37} M. Morse, Functional topology and abstract variational theory. Ann. of Math.
(2) 38 (1937), 386--449.


 \bibitem{McCord66} M.C. McCord, Singular homology groups and homotopy groups of finite topological spaces, Duke Math. J. 33 (1966) 465--474.

\bibitem{PL91} A.D.  Parks and S.L.  Lipscomb,  Homology and hypergraph acyclicity: a combinatorial
invariant for hypergraphs. Naval Surface Warfare Center, 1991.

    \bibitem{RWS11} V Robins, PJ Wood, AP Sheppard,
Theory and algorithms for constructing discrete Morse complexes from grayscale digital images, IEEE Transactions on pattern analysis and machine intelligence 33 (8), 1646--1658.


\bibitem{RWWW18} Shiquan Ren, Chong Wang, Chengyuan Wu and Jie Wu, A Discrete Morse Theory for Hypergraphs, arXiv:1804.07132

\bibitem{Stong66} R.E. Stong, Finite topological spaces, Trans. Amer. Math. Soc. 123 (1966), 325--340.

\bibitem{Stanley79} R. Stanley,  Balanced Cohen-Macaulay complexes,  Trans. Amer. Math. Soc, 249 (1979),  139--157.

\bibitem{Wachs04} Michelle L. Wachs, Poset Topology: Tools and Applications,  lecture notes IAS/Park City Graduate Summer School in Geometric Combinatorics (July 2004)

\bibitem{Zaremsky} M. C. B.  Zaremsky, Bestvina-Brady discrete Morse theory and Vietoris-Rips complexes, Amer.
J. Math., to appear


\end{thebibliography}
\end{document}